\documentclass[11pt]{article}
\usepackage{graphicx}
\usepackage{amsthm, amsmath, amssymb, tikz, bm}
\usepackage{mathtools}
\usepackage{xifthen}
\usepackage{comment}

\usepackage[margin=1.1in]{geometry} 

\usepackage[shortlabels]{enumitem}
\usepackage{todonotes}
\usepackage[colorlinks=true,
linkcolor=blue,citecolor=blue,
urlcolor=blue]{hyperref}

\usetikzlibrary{calc,shapes, backgrounds}

\newtheorem{theorem}{Theorem}[section]
\newtheorem{lemma}[theorem]{Lemma}

\newtheorem{claim}{Claim}

\newtheorem{question}{Question}

\newcommand{\dss}{\displaystyle\sum}

\newcommand{\lp}{\left (}
\newcommand{\rp}{\right )}

\newcommand{\cH}{\mathcal{H}}

\DeclarePairedDelimiter{\ceil}{\lceil}{\rceil}

\makeatletter
               {\list{}{\leftmargin=0pt 
                        \labelwidth\z@ \itemindent-\leftmargin
                        }}%
               {\endlist}
\makeatother

\title{Polynomial $\chi$-binding functions for $t$-broom-free graphs}

\author{
Xiaonan Liu \thanks{Georgia Institute of Technology, Atlanta, GA 30332,
({\tt xliu729@gatech.edu}). This author was partially supported by NSF Grant DMS-1856645.}
\and
Joshua Schroeder \thanks{Georgia Institute of Technology, Atlanta, GA 30332,
({\tt jschroeder35@gatech.edu}). This author was partially supported by NSF Grant DMS-1856645.}
\and
Zhiyu Wang \thanks{Georgia Institute of Technology, Atlanta, GA 30332,
({\tt zwang672@gatech.edu}).}
\and
Xingxing Yu \thanks{Georgia Institute of Technology, Atlanta, GA 30332,
({\tt yu@math.gatech.edu}). This author was partially supported by NSF Grant DMS-1954134.}
}

\begin{document}

\maketitle

\begin{abstract}
 For any positive integer $t$, a \emph{$t$-broom} is a graph obtained from $K_{1,t+1}$ by subdividing an edge once.  In this paper, we show that, for graphs $G$ without induced $t$-brooms, we have $\chi(G) =  o(\omega(G)^{t+1})$, where  $\chi(G)$ and $\omega(G)$ are the chromatic number and clique number of $G$, respectively. When $t=2$, this answers a question of  Schiermeyer and Randerath. Moreover, for $t=2$, we strengthen the bound on $\chi(G)$ to $7\omega(G)^2$, confirming a conjecture of Sivaraman.
For $t\geq 3$ and \{$t$-broom, $K_{t,t}$\}-free graphs, we improve the bound to $o(\omega^{t})$.
\end{abstract}

\section{Introduction}\label{sec:intro}

A class of graphs $\mathcal{G}$ is called \textit{hereditary} if every induced subgraph of any graph in $\mathcal{G}$ also belongs to $\mathcal{G}$. One important and well-studied hereditary graph class is the family of \textit{$H$-free} graphs, i.e., graphs that have no induced subgraph isomorphic to a fixed graph $H$. Given a class of graphs $\cH$, we say that a graph $G$ is \textit{$\cH$-free} if $G$ is $H$-free for every $H\in \cH$.

For a graph $G$, $\chi(G)$ and $\omega(G)$ denote the chromatic number and the clique number of $G$, respectively. 
Tutte \cite{De47, De54} showed that for any $n$, there exists a triangle-free graph with chromatic number at least $n$. (See \cite{Mycielski1955} for another construction, and see \cite{Scott-Seymour2020} for more constructions.) Hence in general there exists no function of $\omega(G)$ that gives an upper bound on $\chi(G)$ for all graphs $G$.
A hereditary class of graphs $\mathcal{G}$ is called \textit{$\chi$-bounded} if there is a function $f$ (called a \textit{$\chi$-binding function}) such that $\chi(G) \leq f(\omega(G))$ for every $G\in \mathcal{G}$. If $f$ is additionally a polynomial function, then we say that $\mathcal{G}$ is \textit{polynomially $\chi$-bounded}. Graph classes with polynomial $\chi$-binding functions are important, as they satisfy the Erd\H{o}s-Hajnal conjecture \cite{EH} on the Ramsey number of $H$-free graphs. 

One well-known hereditary $\chi$-bounded graph class is the class of \textit{perfect graphs} (i.e., graphs $G$ such that every induced subgraph $H$ of $G$ satisfies $\chi(H)=\omega(H)$), which is a class of graphs for which the identity function is a $\chi$-binding function.
A \textit{hole} in a graph $G$ is an induced cycle in $G$ of length at least four. An \textit{antihole} of $G$ is an induced subgraph of $G$ whose complement graph is a cycle. 
Chudnovsky, Robertson, Seymour, and Thomas \cite{CRST2006} characterized perfect graphs as the set of $\{\text{odd hole}, \text{odd antihole}\}$-free graphs, known as the Strong Perfect Graph Theorem.  

One important research direction in the area of $\chi$-boundedness is about determining graph families $\cH$ such that the class of $\cH$-free graphs is $\chi$-bounded, as well as finding the smallest possible $\chi$-binding function for such hereditary class of graphs. By a probabilistic construction of Erd\H{o}s \cite{Erdos1959}, if $\cH$ is finite and none of the graphs in $\cH$ is acyclic, then the family of $\cH$-free graphs is not $\chi$-bounded. Gy\'arf\'as \cite{Gyarfas1975} and Sumner \cite{Sumner1981} independently conjectured that for every tree $T$, the class of $T$-free graphs is $\chi$-bounded. 
This conjecture has been confirmed for some special trees (see, for example, \cite{CSS2019, Gyarfas1975, GST1980, Kierstead-Penrice1994, Kierstead-Zhu2004, Scott1997, Scott-Seymour2020}), but remains open in general.

There is a natural connection between $\chi$-boundedness and the classical Ramsey number $R(m,n)$,  the smallest integer $N$ such that every graph on at least $N$ vertices contains an independent set on $m$ vertices or a clique on $n$ vertices.
Gy\'arf\'as \cite{Gyarfas1975} showed that the class of $K_{1,t}$-free graphs is $\chi$-bounded with the smallest $\chi$-binding function $f^*(\omega)$ satisfying $\frac{R(t,\omega+1)-1}{t-1} \leq f^*(\omega) \leq R(t,\omega)$. 
It is shown in \cite{AKS1980, Bohman2009, Bohman-Keevash2010, Kim1995} that $R(3,n) = \Theta(n^2/\log n)$ and, for fixed $t > 3$,
$c_1 (\frac{n}{\log n})^{\frac{t+1}{2}} \leq R(t,n) \leq c_2 \frac{n^{t-1}}{\log^{t-2} n}$, where $c_1$ and $c_2$ are absolute constants.
Hence for $K_{1,3}$-free (also known as \textit{claw-free}) graphs $G$, we have $\chi(G) = O(\omega(G)^2/\log \omega(G))$. Chudnovsky and Seymour \cite{Chudnovsky-Seymour2005} showed that if $G$ is a connected claw-free graph with independence number $\alpha(G)\ge 3$ then $\chi(G) \leq 2\omega(G)$.

In this paper, we consider a slightly larger class of graphs. For a positive integer $t$, a {\it $t$-broom} is the graph obtained from $K_{1,t+1}$ by subdividing an edge once. See the graph on the right in Figure \ref{fig:broom-graph}, and we denote that $t$-broom by $(u_0, v_1 v_2, u_1, u_2, \ldots, u_t)$ or by $(u_0,v_1 v_2,S)$, where $S=\{u_1, \ldots, u_t\}$.  (Note  that $u_0$ is the vertex of degree $t+1$, $v_1$ is the vertex of degree 2 whose neighbors are $u_0$ and $v_2$, and $u_1, \ldots, u_t$ are the remaining neighbors of $u_0$.)  

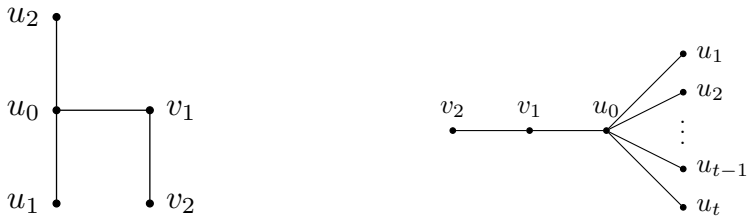
\begin{figure}[htb]
\hbox to \hsize{
	\hfil
	\resizebox{3cm}{!}{\begin{tikzpicture}[scale=1, Wvertex/.style={circle, draw=black, fill=white, scale=2}, bvertex/.style={circle, draw=black, fill=black, scale=0.2},rvertex/.style={circle, draw=red, fill=red, scale=0.2}]

\node [bvertex, label={[font=\small] right:$v_2$}] (v2) at (1,-1) {};
\node [bvertex, label={[font=\small] right:$v_1$}] (v1) at (1,0) {};
\node [bvertex, label={[font=\small] left:$u_0$}] (u0) at (0,0)  {};
\node [bvertex, label={[font=\small] left:$u_1$}] (u1) at (0,-1) {};
\node [bvertex, label={[font=\small] left:$u_2$}] (u2) at (0,1) {};

\draw (v2) -- (v1);
\draw (v1) -- (u0);
\draw (u0) -- (u1);
\draw (u0) -- (u2);

\end{tikzpicture}	}%
	\hfil
	\resizebox{4.5cm}{!}{\begin{tikzpicture}[scale=1, Wvertex/.style={circle, draw=black, fill=white, scale=2}, bvertex/.style={circle, draw=black, fill=black, scale=0.2},rvertex/.style={circle, draw=red, fill=red, scale=0.2}]

\node [bvertex, label={[font=\small] above:$v_2$}] (v2) at (-2,0) {};
\node [bvertex, label={[font=\small] above:$v_1$}] (v1) at (-1,0) {};
\node [bvertex, label={[font=\small] above:$u_0$}] (u0) at (0,0)  {};
\node [bvertex, label={[font=\small] right:$u_1$}] (u1) at (1,1) {};
\node [bvertex, label={[font=\small] right:$u_2$}] (u2) at (1,0.5) {};
\node [font=\scriptsize] at (1,0) {$\rotatebox{90}{$\cdots$}$};

\node [bvertex, label={[font=\small] right:$u_{t-1}$}] (u3) at (1,-0.5) {};
\node [bvertex, label={[font=\small] right:$u_{t}$}] (u4) at (1,-1) {};

\draw (v2) -- (v1);
\draw (v1) -- (u0);
\draw (u0) -- (u1);
\draw (u0) -- (u2);
\draw (u0) -- (u3);
\draw (u0) -- (u4);
\end{tikzpicture}	}%
	\hfil
}
    \caption{The chair graph and the $t$-broom.}
    \label{fig:broom-graph}
\end{figure}

A 2-broom is also known as a \textit{chair} graph or {\it fork} graph, see the left graph in Figure \ref{fig:broom-graph}.  
The class of chair-free graphs is an immediate superclass of claw-free graphs and $P_4$-free graphs, both of which are polynomially $\chi$-bounded. 
Although it has been known \cite{Kierstead-Penrice1994} that the class of $t$-broom-free graphs is $\chi$-bounded, it is unknown whether this class is polynomially $\chi$-bounded. Esperet \cite{Esperet2017} conjectured that any hereditary $\chi$-bounded graph family admits a polynomial $\chi$-binding function. Very recently, extending an idea from a recent result of Carbonero, Hompe, Moore, and Spirkl \cite{CHMS2023}, Bria\'nski, Davies and Walczak \cite{BDW2022+} disproved the conjecture. For the family of chair-free graphs, Schiermeyer and Randerath \cite{Schiermeyer-Randerath2019} asked the following: 

\begin{question} [Schiermeyer and Randerath \cite{Schiermeyer-Randerath2019}]\label{que:chair}
Does there exist a polynomial $\chi$-binding function for the family of chair-free graphs?
\end{question}

There has been recent work on polynomial $\chi$-binding functions for certain subclasses of chair-free graphs. 
See \cite{CHKK2021+, KKS2021+}, where the chair graph is referred to as a \textit{fork} graph. 
In this paper, we show that the class of $t$-broom-free graphs is polynomially $\chi$-bounded. 

\begin{theorem}\label{thm:broom_binding_function}
Let $t$ be a positive integer. For $t$-broom-free graphs $G$, $\chi(G)  = o(\omega(G)^{t+1})$. 
\end{theorem}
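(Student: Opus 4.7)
I would prove Theorem~\ref{thm:broom_binding_function} by induction on $t$. For the base case $t=1$, a $1$-broom is exactly the path $P_4$, and $P_4$-free graphs are cographs, hence perfect, so $\chi(G)=\omega(G)=o(\omega(G)^2)$ trivially.

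For the inductive step, assume every $(t-1)$-broom-free graph $H$ satisfies $\chi(H)=o(\omega(H)^t)$, and let $G$ be a $t$-broom-free graph with $\omega(G)=\omega$. If $G$ is additionally $K_{1,t+1}$-free, then Gy\'arf\'as's theorem yields $\chi(G)\le R(t+1,\omega)$, and the Ajtai--Koml\'os--Szemer\'edi upper bound $R(t+1,\omega)=O(\omega^{t}/\log^{t-1}\omega)$ already gives $\chi(G)=o(\omega^{t+1})$, without even invoking the inductive hypothesis. So we may assume that $G$ contains an induced $K_{1,t+1}$ with center $c$ and leaves $\ell_1,\dots,\ell_{t+1}$.

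The key structural consequence of $t$-broom-freeness is the following dichotomy: every $x \in V(G)\setminus N[c]$ is adjacent to either $0$ or at least $2$ of the leaves $\ell_1,\dots,\ell_{t+1}$, since otherwise $(c,\ell_i x,\{\ell_j:j\ne i\})$ is an induced $t$-broom in $G$. My plan is to use this dichotomy as the engine for an iterative peeling: repeatedly locate an induced $K_{1,t+1}$, color $G[N(c)]$ with $o(\omega^{t})$ colors by applying the inductive hypothesis to a $(t-1)$-broom-free subgraph extracted from $G[N(c)]$ (whose clique number is at most $\omega-1$), and continue on the remainder; once the residual graph is $K_{1,t+1}$-free, finish it with $O(\omega^{t}/\log^{t-1}\omega)$ colors using Gy\'arf\'as's Ramsey bound. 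The target is that the accumulation across $O(\omega)$ peeling rounds yields $o(\omega^{t+1})$ colors total.

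The main obstacle is the extraction step in each peeling round: $G[N(c)]$ is in general \emph{not} $(t-1)$-broom-free, so one cannot naively invoke the inductive hypothesis on it. One must instead use the $0$-or-$\ge 2$ dichotomy above, applied both to $c$'s non-neighbors and to additional induced $K_{1,t+1}$ structure inside $N(c)$, in order either to extract a large $(t-1)$-broom-free subgraph of $G[N(c)]$ whose chromatic number matches up to a bounded factor, or to recast the entire argument as a double induction on $(t,\omega)$ in which the dichotomy drives the $\omega$-step at each fixed $t$. Keeping the per-round multiplicative overhead to $O(1)$, so that the telescoping sum remains within the $o(\omega^{t+1})$ budget, is where the bulk of the technical work will lie.
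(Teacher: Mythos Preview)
Your outline is genuinely different from the paper's route. The paper does not induct on $t$; it fixes $t$ and inducts on $\omega$. The anchor is not a star but a maximal induced complete $q$-partite subgraph $Q$ with one part of size $t$ and the remaining $q-1$ parts singletons. The decomposition is $V(G)=V(Q)\cup N(Q)\cup N^{\ge 2}(Q)$, with $N(Q)$ further split according to how vertices meet $V_q$ and the singleton parts (Lemmas~\ref{lem:N2(Q)_and_3beyond}--\ref{lem:A}). The resulting recursion has the shape $f(\omega)\ge f(\omega-1)+f(1)+c(t)\,\omega R(t,\omega)$, and summing over $\omega$ gives $f(\omega)=o(\omega^{t+1})$.

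The obstacle you flag is not a technicality to be cleaned up later; it is the whole proof. The subgraph $G[N(c)]$ is \emph{not} $(t-1)$-broom-free in general: take any $(t-1)$-broom and add a universal vertex $c$; the result is $t$-broom-free (the universal vertex cannot serve as a pendant, and the old center has degree only $t+1$ with every candidate pendant already adjacent to $c$), yet $G[N(c)]$ is exactly the $(t-1)$-broom. Your $0$-or-$\ge2$ dichotomy is a statement about vertices \emph{outside} $N[c]$ and their adjacencies to the leaves; it carries no information about the internal structure of $G[N(c)]$, so it cannot supply the extraction you need. Invoking ``additional induced $K_{1,t+1}$ structure inside $N(c)$'' or a ``double induction on $(t,\omega)$'' does not constitute a plan: at each step you would still need a decomposition of the current graph into pieces with strictly smaller $\omega$ or genuine $(t-1)$-broom-freeness, and nothing in your setup produces one.

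There is a second, independent gap: the claim of ``$O(\omega)$ peeling rounds'' is unsupported. Deleting $N[c]$ need not decrease $\omega$ of the remainder, and you name no other monotone parameter that forces termination after $O(\omega)$ rounds. Without such a bound, even a correct per-round cost of $o(\omega^t)$ could be incurred unboundedly many times, destroying the $o(\omega^{t+1})$ budget.
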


When $t=1$, a $t$-broom-free graph is a $P_4$-free graph and, hence, perfect; so the assertion of Theorem \ref{thm:broom_binding_function} holds. When $t = 2$,  Theorem \ref{thm:broom_binding_function} answers Question \ref{que:chair} in the affirmative. Indeed, we prove a quadratic bound for the case when $t=2$, confirming a conjecture of Sivaraman mentioned in \cite{KKS2021+}.
 
\begin{theorem}\label{thm:chair_binding_function}
For all chair-free graphs $G$, $\chi(G) \leq 7\omega(G)^2$.
\end{theorem}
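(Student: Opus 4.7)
The plan is to prove Theorem \ref{thm:chair_binding_function} by strong induction on $\omega = \omega(G)$ via the recurrence
\[
f(\omega) \le f(\omega-1) + 15\omega - 8,
\]
where $f(\omega)$ denotes the supremum of $\chi(G)$ over all chair-free graphs $G$ with $\omega(G) \le \omega$. Since $f(\omega) \le \omega$ for $\omega \le 2$, telescoping this recurrence yields $f(\omega) \le 7.5\omega^2$ for all $\omega$. For the inductive step, I fix a chair-free graph $G$ with $\omega(G) = \omega \ge 3$ and a maximum clique $K = \{k_1, \ldots, k_\omega\}$. For every vertex $v \in V(G)$, the graph $G[N(v)]$ is chair-free with clique number at most $\omega - 1$ (else $v$ extends it to a clique of size $\omega+1$), and hence $\chi(G[N(v)]) \le f(\omega-1)$ by induction. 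Using the inequality $\chi(G) \le \chi(G[N(v)]) + \chi(G[V(G) \setminus N[v]])$ (which holds since $v$ is isolated in $G[V(G) \setminus N(v)]$), it then suffices to identify a vertex $v$ with $\chi(G[V(G) \setminus N[v]]) \le 15\omega - 8$.

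The main structural input from chair-freeness is the following lemma: writing $R := V(G) \setminus N[K]$ for the set of vertices with no neighbor in $K$, if a vertex $u \in V(G) \setminus K$ has exactly one neighbor $k_i$ in $K$, then $N(u) \cap R$ induces a clique in $G$ (and hence has size at most $\omega - 1$). I verify this directly: if $r_1, r_2 \in N(u) \cap R$ were non-adjacent, then choosing any $k_j \in K \setminus \{k_i\}$, the five vertices $\{u, r_1, r_2, k_i, k_j\}$ would induce a chair centered at $u$, since the edges $ur_1$, $ur_2$, $uk_i$, and $k_ik_j$ are forced, while the non-edges $r_1r_2$ (by assumption), $r_1k_i$, $r_2k_i$, $r_1k_j$, $r_2k_j$ (all because $r_1, r_2 \in R$), and $uk_j$ (because $|N(u) \cap K| = 1$) are all absent.

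Taking $v := k_1 \in K$, I partition $V(G) \setminus N[k_1]$ into $R$ together with the strata $V_S := \{u \in V(G) \setminus K : N(u) \cap K = S\}$ for nonempty $S \subseteq K \setminus \{k_1\}$. For every such $S$, any clique in $V_S$ extends through $S$ to a clique of $G$, so $\omega(G[V_S]) \le \omega - |S|$; thus the $|S| \ge 2$ strata can be handled directly by induction. The $|S| = 1$ strata and the set $R$ are coupled through the structural lemma above, which forces every singly-attached vertex to have a clique neighborhood inside $R$. The plan is to combine these controls with a component-by-component analysis of how $G[R]$ attaches to $N(K) \setminus K$ (through singly-attached vertices) to build a combined coloring of $G[V(G) \setminus N[k_1]]$ that uses at most $15\omega - 8$ colors overall.

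The main obstacle is executing the combination tightly enough to achieve the constant $15$. The difficulty is that $\omega(G[R])$ can a priori be as large as $\omega$, so naive induction on $G[R]$ fails to reduce the clique number; bounding $\chi(G[R])$ linearly in $\omega$ will require both chair-freeness applied at the $R$--$(N(K) \setminus K)$ interface and an additional inductive refinement. Coordinating the colorings of the singly-attached stratum, the multiply-attached strata, and the pieces of $G[R]$ so that they fit into a shared palette of size $15\omega - 8$ — rather than summing their individual bounds, which would exceed the budget — is the delicate quantitative heart of the proof.
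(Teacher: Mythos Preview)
Your proposal has a genuine gap at exactly the point you flag as the ``main obstacle'': you never show that $\chi(G[V(G)\setminus N[k_1]])\le 15\omega-8$. Your single structural observation (that a vertex with a unique neighbour in $K$ has a clique neighbourhood in $R$) is correct but far too weak to yield a linear bound on the chromatic number of $R$, let alone of the whole non-neighbourhood of $k_1$; note that $\omega(G[V(G)\setminus N[k_1]])$ can equal $\omega$, so no direct induction applies. There is also a base-case error: $f(2)\le 2$ is false, since the $5$-cycle is chair-free with $\omega=2$ and $\chi=3$.

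The paper's argument is structurally different, and the difference matters. Instead of a maximum clique, it fixes a maximum induced complete $q$-partite subgraph $Q$ with one part $V_q$ of size exactly $2$ (an independent pair). This pair is the engine behind every structural lemma: it supplies either the two pendant leaves or the length-$2$ handle needed to exhibit an induced chair, and this is what forces (i) $\Delta(G[N^{\ge 2}(Q)])<3\omega$, (ii) the rigid complete/anticomplete behaviour of $Z$ versus the components of $W$, and (iii) perfection of the two halves of the ``neutral'' set $A$ (Lemma~\ref{lem:A'}, via the Strong Perfect Graph Theorem). These give the recurrence $\chi(G)\le f(\omega-1)+f(1)+7\omega+1$, which is what pins down the constant $7.5$. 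Your clique-based decomposition has no analogue of the independent pair $V_q$, and without it there is no evident mechanism to manufacture the chairs that control $R$ and the strata $V_S$; so as it stands the plan is a heuristic outline rather than a proof.
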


We remark that after the submission of our paper, Scott, Seymour and Spirkl \cite{SSS2022} extended Theorem \ref{thm:broom_binding_function} by showing that for any fixed double star $T$, the class of $T$-free graphs is polynomially $\chi$-bounded, where a \textit{double star} is a tree in which at most two vertices have degree more than one. In the case of $t$-broom-free graphs, our $\chi$-binding function is much smaller.

 Schiermeyer and Randerath \cite{Schiermeyer-Randerath2019} informally conjectured that the smallest $\chi$-binding functions for the class of chair-free graphs and the class of claw-free graphs are asymptotically the same.
 Very recently, Chudnovsky, Huang, Karthick, and Kaufmann \cite{CHKK2021+} proved that every $\{\textrm{chair}, K_{2,2}\}$-free graph $G$ satisfies that $\chi(G)\leq \ceil{\frac{3}{2}\omega(G)}$.
 Here, we consider \{$t$-broom,$K_{t,t}$\}-free graphs for $t\ge 3$ and prove the following

\begin{theorem} \label{thm:Ktt-free}
 Let $t\ge 3$ be an integer. For all \{$t$-broom, $K_{t,t}$\}-free graphs $G$, $\chi(G)=o\lp \omega(G)^{t}\rp$.
 \end{theorem}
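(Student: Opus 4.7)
My plan is to prove Theorem~\ref{thm:Ktt-free} by induction on $\omega=\omega(G)$, extending the proof framework of Theorem~\ref{thm:broom_binding_function} and using the K\H{o}v\'ari--S\'os--Tur\'an theorem to extract an extra polynomial saving from the $K_{t,t}$-free condition. Assume the statement for all $\{t\text{-broom}, K_{t,t}\}$-free graphs of clique number less than $\omega$ and, after passing to a $\chi$-critical subgraph, that $\delta(G)\ge\chi(G)-1$. Pick a vertex $v$ maximizing $\chi(G[N(v)])$. Since $\omega(G[N(v)])\le\omega-1$, induction yields $\chi(G[N(v)])\le\ell=o(\omega^{t-1+2/(t+1)})$. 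Fix a proper coloring of $G[N(v)]$ with color classes $C_1,\ldots,C_\ell$.

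The key structural input comes from the $t$-broom-free condition applied at center $u_0=v$. Each $C_j$ is independent, so if some $w\in N_2(v)$ had a neighbor $v_1\in C_j$ together with $t$ non-neighbors $u_1,\ldots,u_t\in C_j\setminus\{v_1\}$, then $(v,v_1w,\{u_1,\ldots,u_t\})$ would be an induced $t$-broom. Hence for each $C_j$ with $|C_j|\ge 2t-1$, every $w\in N_2(v)$ with a neighbor in $C_j$ is adjacent to all but at most $t-1$ vertices of $C_j$, and therefore has at least $t$ neighbors there. Writing $A_j=\{w\in N_2(v):N(w)\cap C_j\ne\varnothing\}$, the $K_{t,t}$-free condition forces every $t$-subset of $C_j$ to have at most $t-1$ common neighbors in $A_j$, so a double count of incidences gives
\begin{equation*}
|A_j|\,\binom{|C_j|-t+1}{t}\;\le\;(t-1)\binom{|C_j|}{t}.
\end{equation*}
This ratio is $O_t(1)$ once $|C_j|\ge 2t-1$, so $|A_j|=O_t(1)$ for each large class, and the union $\bigcup_{\text{large }j}A_j$ has size $O_t(\ell)$.

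The vertices of $N_2(v)$ meeting only small classes ($|C_j|<2t-1$) have all their $N(v)$-neighbors inside $S=\bigcup_{\text{small }j}C_j$, a set of size $O(\ell)$; since $G[S]$ is $\{t\text{-broom}, K_{t,t}\}$-free with $\omega(G[S])\le\omega-1$, induction controls $\chi(G[S])$, and a further K\H{o}v\'ari--S\'os--Tur\'an application to the bipartite graph between $S$ and these remaining vertices of $N_2(v)$ bounds their chromatic number too. Combining these estimates through $\chi(G)\le\chi(G[N(v)])+1+\chi(G[V\setminus N[v]])$ together with a Gy\'arf\'as-style BFS-layering reduction $\chi(G)\le 2\max_i\chi(G[L_i])$, and iterating the structural analysis at each layer, closes the induction. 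The exponent $t-1+2/(t+1)$ will emerge from balancing the inductive estimate on $\chi(G[N(v)])$ against the K\H{o}v\'ari--S\'os--Tur\'an inequality at the right scale.

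I expect the main obstacle to be controlling the deeper BFS layers $L_i$ for $i\ge 2$: unlike $L_1=N(v)$, the clique number of $L_i$ need not drop, so the inductive hypothesis is not directly applicable. Resolving this likely requires re-running the broom analysis with a new root inside each deeper layer, or a more global structural claim that the K\H{o}v\'ari--S\'os--Tur\'an saving propagates layer by layer. A secondary technical issue is the bookkeeping of the small-color-class contributions, which must be absorbed without degrading the exponent past $t-1+2/(t+1)$; this requires choosing the threshold between ``large'' and ``small'' classes at exactly the point where the two regimes of the K\H{o}v\'ari--S\'os--Tur\'an bound balance each other.
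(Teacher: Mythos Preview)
Your local observation is correct: if $w\in N_2(v)$ has a neighbour $v_1\in C_j$ and $t$ non-neighbours $u_1,\ldots,u_t\in C_j$, then $(v,v_1w,\{u_1,\ldots,u_t\})$ is an induced $t$-broom, and the double count bounding $|A_j|$ is valid. But the proposal does not close, and the obstacle you flag is not a technicality---it is the heart of the matter.

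Rooting at a single vertex $v$ gives you no control over $G[N^{\ge 2}(v)]$: its clique number need not drop, your broom argument says nothing about edges inside $N^{\ge 2}(v)$, and the BFS inequality $\chi(G)\le 2\max_i\chi(G[L_i])$ simply pushes the whole problem into some $L_i$ with $i\ge 2$. Re-rooting inside that layer produces the same difficulty one step deeper. The paper avoids this by rooting not at a vertex but at a maximal induced complete $q$-partite subgraph $Q$ with one part $V_q$ of size exactly $t$. The payoff is Lemma~\ref{lem:N2(Q)_and_3beyond}: the presence of the size-$t$ independent part $V_q$, combined with any shortest path from $N^{\ge 2}(Q)$ back to $Q$, forces an induced $t$-broom whenever a vertex of $N^{\ge 2}(Q)$ has degree $\ge 3R(t,\omega)$ inside $N^{\ge 2}(Q)$. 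So $\chi(G[N^{\ge 2}(Q)])\le 3R(t,\omega)$ outright---no induction needed there. A single vertex $v$ cannot play this role because it supplies no independent set of size $t$ to serve as the leaves of the broom.

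Two further points. First, your treatment of the ``small classes'' case is a gap in its own right: knowing $\chi(G[S])$ is small does not bound the chromatic number of the $N_2(v)$-vertices whose $N(v)$-neighbours lie in $S$, and a K\H{o}v\'ari--S\'os--Tur\'an bound on the bipartite graph between them limits edge count, not chromatic number. Second, the exponent $t-1+\frac{2}{t+1}$ does not fall out of your setup. In the paper it arises in Lemma~\ref{lem:KttZ}, where the set $Z$ of vertices complete to $V_q$ is split at a threshold $p=\lceil\omega^{(t-1)/(t+1)}\rceil$: vertices anticomplete to at most $p$ of the singleton parts are bounded via $R(t,p+1)$, the rest via $R(t-1,\omega)$, and equating the two contributions gives the exponent. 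Your framework has no analogue of $Z$ and no place where that optimisation occurs.
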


In Section \ref{sec:structre_t-broom-free}, we obtain useful structural information on $t$-broom-free graphs, by taking an induced complete multipartite subgraph and studying subgraphs induced by vertices at certain distance from this mulitpartite graph. In Section \ref{sec:main_theorems}, we complete the proofs of Theorems~\ref{thm:broom_binding_function} and \ref{thm:chair_binding_function}. We prove Theorem \ref{thm:Ktt-free} in Section \ref{sec:Ktt}.

In the remainder of this section, we describe notation and terminology used in the paper.
For a positive integer $k$, we use $[k]$ to denote the set $\{1, \ldots, k\}$. We denote a path by a sequence of vertices in which consecutive vertices are adjacent. For a graph $G$ and $S\subseteq V(G)$, $G[S]$ denotes the subgraph of $G$ induced by $S$. We use $\alpha(G)$ to denote the independence number of $G$. 

 Let $G$ be a graph. For any $v\in V(G)$, $N_G(v)$ denotes the neighborhood of $v$ and $d_G(v)=|N_G(v)|$ is the degree of $v$ in $G$. We use $\Delta(G)$ and $\delta(G)$ to denote the maximum and minimum degree of $G$, respectively. For any positive integer $i$, let $N^i_G(S):= \{u \in V(G)\backslash S: \min\{d_G(u,v): v\in S\} =i\}$, where $d_G(u,v)$ is the distance between $u$ and $v$ in $G$. Then $N^1_G(S)=N_G(S)$ is the neighborhood of $S$ in $G$. 
Moreover, we let $N^{\geq i}_G(S):= \cup_{j=i}^{\infty} N_G^j(S)$.  When $S=\{s\}$ we write $N_G^i(s)$ instead of 
$N_G^i(\{s\})$. For any subgraph $H$ of $G$, we write $N_G^i(H)$ for $N_G^i(V(H))$ and $N_G^{\ge i}(H)$ for $N_G^{\ge i}(V(H))$.
When $G$ is clear from the context, we ignore the subscript $G$.

Let $G$ be a graph and let $S,T$ be disjoint subsets of $V(G)$. For a vertex $v\in V(G)\backslash S$, we say that $v$ is \textit{complete} to $S$ in $G$ if $vs\in E(G)$ for all $s\in S$; $v$ is \textit{anticomplete} to $S$ if $vs\notin E(G)$ for all $s\in S$; and $v$ is \textit{mixed} on $S$ if $v$ is neither complete nor anticomplete to $S$.
We say that $S$ is \textit{complete} (respectively, \textit{anticomplete}) to $T$ if all vertices in $S$ are complete (respectively, anticomplete) to $T$. 
 
\section{Structure of t-broom-free graphs}\label{sec:structre_t-broom-free}

In our proofs of the three results stated in Section \ref{sec:intro}, we work with an induced complete multipartite subgraph $Q$ of $G$ and bound the chromatic numbers of subgraphs induced by vertices 
at certain distance from $Q$. In this section, we prove a few lemmas on the structure of those subgraphs.  (Since the statements of the lemmas are somewhat technical, the interested reader may want to read Sections \ref{sec:main_theorems} and \ref{sec:Ktt} before this section.) 
The first lemma concerns $G[N^{\ge 2}(Q)]$.

\begin{lemma}\label{lem:N2(Q)_and_3beyond}
Let $t,q$ be integers with $t\ge 2$ and $q\ge 2$. Let $G$ be a $t$-broom-free connected graph and $V_1,\ldots, V_q$ be pairwise disjoint independent sets in $G$, such that $Q:=  G[\cup_{i\in [q]} V_i]$ is a complete $q$-partite graph. Suppose for every $v\in N(Q)$, $v$ is not complete to $V(Q)$. Then  $\Delta(G[ N^{\geq 2}(Q)]) <3R(t,\omega)$. 

\end{lemma}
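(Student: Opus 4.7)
The plan is to argue by contradiction: suppose some $u \in N^{\geq 2}(Q)$ satisfies $|A| \geq 3R(t,\omega)$, where $A := N_G(u) \cap N^{\geq 2}(Q)$, and exhibit an induced $t$-broom in $G$. First fix a shortest $u$-$Q$ path $u = p_0, p_1, \ldots, p_d$ with $p_d \in V(Q)$ and $d = d(u,Q) \geq 2$; set $x = p_1$ and $y = p_2$, so that $u \sim x$, $x \sim y$, and $u \not\sim y$ by shortest-path minimality. Partition $A$ by adjacency to $\{x,y\}$:
\[
A_1 = \{v \in A : v \not\sim x,\ v \not\sim y\},\quad A_2 = \{v \in A : v \sim x,\ v \not\sim y\},\quad A_3 = \{v \in A : v \sim y\},
\]
so by pigeonhole $|A_i| \geq R(t,\omega)$ for some $i \in \{1,2,3\}$.

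Two uniform tools will be invoked in each case. First, the \emph{Ramsey step}: for any $w \in V(G)$ and any $S \subseteq N_G(w)$ with $|S| \geq R(t,\omega)$, an $\omega$-clique inside $S$ together with $w$ would yield an $(\omega+1)$-clique, contradicting $\omega(G) = \omega$; hence $S$ must contain an independent $t$-set. Second, a \emph{distance lemma}: every $v \in A$ satisfies $d(v,Q) \geq d - 1$ (because $v \sim u$), so $v$ is automatically non-adjacent to any path vertex $p_j$ with $j \geq 3$, since an edge would force $d(v,Q) \leq d - j + 1 < d - 1$.

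In Case 1 ($|A_1|$ large), the Ramsey step with $w = u$ produces an independent set of leaves $u_1, \ldots, u_t \subseteq A_1$ and $(u,\, xy,\, u_1,\ldots,u_t)$ is the induced $t$-broom. In Case 2 ($|A_2|$ large) the center shifts to $x$: if $d = 2$ then $x \in N(Q)$, and using that $x$ is not complete to $V(Q)$ together with $q \geq 2$ we construct a pendant $x$-$q^*$-$q^{**}$ inside $Q$; if $d \geq 3$ the pendant is $x$-$y$-$p_3$. Leaves come from Ramsey on $A_2 \subseteq N(x)$. In Case 3 ($|A_3|$ large) the center shifts to $y$: the case is vacuous for $d = 2$ because then $y \in V(Q)$ forces $A \cap N(y) = \emptyset$; for $d = 3$ the pendant $y$-$q^*$-$q^{**}$ is built inside $Q$ using $y \in N(Q)$; for $d \geq 4$ the pendant is $y$-$p_3$-$p_4$. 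Leaves come from Ramsey on $A_3 \subseteq N(y)$.

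The routine bookkeeping is checking that all adjacency and non-adjacency conditions of an induced $t$-broom hold in each case: non-adjacency of the leaves to $x$ and $y$ is built into the partition; non-adjacency to any deeper path vertex $p_j$ with $j \geq 3$ follows from the distance lemma; and non-adjacency to any $q^*, q^{**} \in V(Q)$ is automatic since $A \subseteq N^{\geq 2}(Q)$. The main \emph{obstacle} I expect is the $Q$-pendant construction itself: given a shifted center $w \in N(Q)$ not complete to $V(Q)$, one needs $q^*, q^{**} \in V(Q)$ with $w \sim q^*$, $w \not\sim q^{**}$, and $q^* q^{**} \in E(Q)$. This requires a short split: if $w$ has a non-neighbor $q^{**}$ in a part different from that of some neighbor $q^*$, then $q^* q^{**}$ is automatically an edge of $Q$; otherwise all of $w$'s non-neighbors in $V(Q)$ lie in the same part $V_i$ as some neighbor of $w$, and using $q \geq 2$ one locates another part $V_j$ in which either $w$ has a neighbor (providing a fresh $q^*$ in a part disjoint from the non-neighbor in $V_i$) or $w$ is anticomplete (providing a fresh $q^{**}$ in a different part from $q^*$), yielding the pendant either way. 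The factor $3$ in the bound is exactly what makes the three-way pigeonhole valid.
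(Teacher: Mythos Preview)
Your proof is correct and follows essentially the same approach as the paper: both fix a shortest path from the high-degree vertex to $Q$, partition its $N^{\geq 2}(Q)$-neighbourhood by adjacency to the next two path vertices, apply Ramsey in each of the three resulting cases, and build an induced $t$-broom whose pendant lies either along the path or (when the centre reaches $N(Q)$) is a neighbour/non-neighbour pair in distinct parts of $Q$. Your exposition spells out the distance argument and the $Q$-pendant construction more explicitly than the paper does, but the underlying argument is identical.
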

\begin{proof}
For convenience, let $\omega=\omega(G)$. Suppose there exists
$z_k\in N^{k}(Q)$ for some $k\geq 2$ such that $|N(z_k)\cap N^{\geq 2}(Q)| \geq 3R(t,\omega)$.  Then $G$ has a path $z_k z_{k-1} \cdots z_1 z_0$, such that $z_0\in V(Q)$ and $z_{i} \in N^i(Q)$ for $i \in [k]$. Note that this path is induced. 
Since $z_1 \in N(Q)$, there exist distinct $i, j\in [q]$ such that $z_1 a_i \in E(G)$ for some $a_i \in V_i$, and $z_1 a_j \notin E(G)$ for some $a_j \in V_j$.



If $G[(N(z_k)\cap N^{\geq 2}(Q))\cap  N(z_{k-2})]$ has an independent set of size $t$, say $T$, then $k\geq 3$ and $(z_1, a_i a_j, T)$ (when $k=3$) or $(z_{k-2},z_{k-3} z_{k-4}, T)$ (when $k\geq 4$) is an induced $t$-broom in $G$, a contradiction. So such $T$ does not exist. Then $|(N(z_k)\cap N^{\geq 2}(Q))\cap N(z_{k-2})|<R(t,\omega)$, which implies  $|(N(z_k)\cap N^{\geq 2}(Q))\backslash  N(z_{k-2})|\ge 2R(t,\omega)$. Therefore, $|(N(z_k)\cap N^{\geq 2}(Q)\backslash N(z_{k-2})) \backslash N(z_{k-1})| \geq R(t,\omega)$ or $|(N(z_k) \cap N^{\geq 2}(Q)\backslash N(z_{k-2}))\cap  N(z_{k-1})| \geq R(t,\omega)$.

In the former case, let $I_1$ be an independent set in $G[(N(z_k)\cap N^{\geq 2}(Q)\backslash N(z_{k-2})) \backslash N(z_{k-1})]$ with $|I_1|=t$; then $(z_k, z_{k-1}z_{k-2}, I_1)$ is an induced $t$-broom in $G$, a contradiction.
In the latter case, let $I_2$ be an independent set of size $t$ in $G[(N(z_k) \cap N^{\geq 2}(Q)\backslash N(z_{k-2}))\cap  N(z_{k-1})]$. Then $(z_1,a_i a_j,I_2)$ (when $k=2$) or $(z_{k-1}, z_{k-2} z_{k-3}, I_2)$ (when $k\ge 3$) is an induced $t$-broom in $G$,  a contradiction. 
\end{proof}



The next lemma describes the structure of subgraphs of $G$ induced by certain subsets of $N(Q)$. 

\begin{lemma}\label{lem:W}
Let $t,q$ be integers with $t\ge 2$ and $q\ge 2$. Let $G$ be a $t$-broom-free graph and $V_1,\ldots, V_q$ be pairwise disjoint independent sets in $G$, such that $|V_q|= t$ and $Q:=  G[\cup_{i\in [q]} V_i]$ is a complete $q$-partite graph. Let 
\begin{itemize}
 \item $Z=\{v\in N(Q): v \mbox{ is complete to } V_q\}$,
 \item $W=\{v\in N(Q): v \mbox{ is anticomplete to } V_q\}$, and, 
 \item for each $I\subseteq \cup_{i\in [q-1]} V_i$, let $$W_I:= \{v \in W: \textrm{ $v$ is complete to $I$ and anticomplete to $\cup_{i\in [q-1]} V_i\backslash I$}\}.$$
\end{itemize}
Then the following statements hold:
\begin{itemize}
 \item [(i)] For any distinct subsets $I, I'$ of $\cup_{i\in [q-1]} V_i$, $W_I$ is anticomplete to $W_{I'}$.
 \item [(ii)] For each $z\in Z$ and for any component $X$ of $G[W]$, $z$ is complete to $V(X)$ or $z$ is anticomplete to $V(X)$.
 \item [(iii)] For any component $X$ of $G[W]$ with $\alpha(X)\ge t$, let $Z_{X}:=\{z\in Z:\textrm{ $z$ is complete to $V(X)$}\}$; then $Z_{X}$ is complete to $Z\backslash Z_{X}$.
\end{itemize}
\end{lemma}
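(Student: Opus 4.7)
The plan is to prove each of (i), (ii), (iii) by contradiction, in each case exhibiting an induced $t$-broom whose $t$ independent leaves $u_1,\ldots,u_t$ are either the vertices of $V_q$ itself (possible because $|V_q|=t$ and $V_q$ is independent) or an independent set of size $t$ drawn from the relevant component. All three constructions fit into a single template, and the main bookkeeping task is to verify the required adjacencies and nonadjacencies in the broom pattern.

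For part (i), I would suppose toward contradiction that $vv'\in E(G)$ for some $v\in W_I$ and $v'\in W_{I'}$ with $I\neq I'$. By symmetry, pick $a\in I\setminus I'$; then $va\in E(G)$ (since $v$ is complete to $I$) and $v'a\notin E(G)$ (since $a\in \bigl(\bigcup_{i\in[q-1]}V_i\bigr)\setminus I'$ and $v'$ is anticomplete to that set). I would then exhibit the induced $t$-broom $(a, vv', V_q)$: the vertex $a$ lies in some $V_j$ with $j<q$, so $a$ is complete to $V_q$ by complete multipartiteness, and $a\sim v$; we have $v\sim v'$ by assumption and $a\not\sim v'$; both $v,v'\in W$ are anticomplete to $V_q$; and $V_q$ is independent. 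For part (ii), if some $z\in Z$ were neither complete nor anticomplete to a component $X_I$ of $G[W_I]$, then by connectivity there exist adjacent $x,y\in V(X_I)$ with $zx\in E(G)$ and $zy\notin E(G)$, and $(z, xy, V_q)$ is an induced $t$-broom since $z$ is complete to $V_q$ (as $z\in Z$) while $x,y\in W$ are anticomplete to $V_q$.

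For part (iii), I would suppose $z\in Z_{X_I}$ and $z'\in Z\setminus Z_{X_I}$ are nonadjacent. Applying (ii) to $z'$ and $X_I$, the vertex $z'$ is complete or anticomplete to $V(X_I)$; since $z'\notin Z_{X_I}$, it must be anticomplete. Using $\alpha(X_I)\geq t$, choose an independent set $T\subseteq V(X_I)$ with $|T|=t$, and pick any $w\in V_q$. Then $(z, wz', T)$ is an induced $t$-broom: $z\sim w$ since $z\in Z$, $z$ is complete to $T\subseteq V(X_I)$ since $z\in Z_{X_I}$, $w\sim z'$ since $z'\in Z$, $z\not\sim z'$ by assumption, $w$ is not adjacent to any vertex of $T\subseteq W$ (as $w\in V_q$ and $W$ is anticomplete to $V_q$), and $z'$ is not adjacent to $T$ by the anticomplete conclusion from (ii). I do not anticipate a substantive obstacle: the argument is a direct construction in each case, and the only care required is bookkeeping which vertex plays the role of $u_0$, $v_1$, $v_2$, and the leaves, together with checking that all six nonedge constraints of the broom hold.
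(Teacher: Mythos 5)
Your proposal is correct and follows essentially the same route as the paper: for each part you exhibit the same induced $t$-broom the authors use, namely $(a,vv',V_q)$ for (i), $(z,xy,V_q)$ for (ii) (the paper phrases the connectivity step as induction along edges of $X_I$, which is equivalent to your path argument), and $(z,wz',T)$ for (iii), with all adjacency and nonadjacency checks matching. No gaps.
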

\begin{proof}
Let $I,I'\subseteq \cup_{i\in [q-1]} V_i$ such that $I\ne I'$, and assume that $W_I$ is not anticomplete to $W_{I'}$. Then there exist $w \in W_I$ and $w'\in W_{I'}$ such that $ww'\in E(G)$. Since $I \neq I'$, we may assume that there exists $a \in I\backslash I'$. Then $(a, ww', V_q)$ is an induced $t$-broom in $G$, a contradiction. So (i) holds. 

Next, let $z\in Z$ and $w, w'\in V(X)$ such that $ww'\in E(G)$. If $zw\in E(G)$ and $zw'\not\in E(G)$, then $(z, w w', V_q)$ is an induced $t$-broom in $G$, a contradiction. Hence (ii) holds.

To prove (iii),  suppose there exist $z \in Z_{X}$ and $z'\in Z\backslash Z_{X}$ such that $zz'\notin E(G)$. By assumption, $X$ contains an independent set of size $t$, say $T$. By (ii), $z'$ is anticomplete to $V(X)$; so $z'$ is anticomplete to $T$. Choose $a \in V_q$. Now $(z, a z', T)$ is an induced $t$-broom in $G$, a contradiction. Thus we have (iii). 
\end{proof}

We now consider a specific type of complete multipartite subgraphs $Q$, as well as the vertices in $N(Q)$ that are complete to all but the last part of $Q$. We can bound the maximum degree of the subgraph of $G$ induced by such vertices.

\begin{lemma}\label{lem:B}
Let $t,q$ be integers with $t\ge 2$ and $q\ge 2$. Let $G$ be a $t$-broom-free graph and $V_1,\ldots, V_q$ be pairwise disjoint independent sets in $G$, such that $|V_i|=1$ for $i\in[q-1]$, $|V_q|=t$, and $Q:=  G[\cup_{i\in [q]} V_i]$ is a complete $q$-partite graph.
Suppose such $Q$ is chosen to maximize $q$. Let $$B:= \{ v\in N(Q): \mbox{$v$ is  complete to $V(Q)\backslash V_q$}\}.$$ Then  $\Delta(G[B]) < R(t, \omega(G))$. 
\end{lemma}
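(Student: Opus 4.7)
The plan is to argue by contradiction using Ramsey's theorem and the maximality of $q$. Suppose some $b \in B$ satisfies $|N_B(b)| \geq R(t,\omega)$, where $\omega:=\omega(G)$. Applying Ramsey's theorem to $G[N_B(b)]$, I obtain either a clique of size $\omega$ or an independent set of size $t$, and I intend to rule out both cases.

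The clique case is immediate: such a clique together with $b$ (which is adjacent to every vertex of $N_B(b)$) forms a clique of size $\omega+1$ in $G$, contradicting $\omega(G)=\omega$.

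The independent set case is the substantive one. Let $I \subseteq N_B(b)$ be an independent set of size $t$. Rather than try to construct an induced $t$-broom (which seems awkward, since every vertex of $B$ is already complete to all singleton parts $V_1,\ldots,V_{q-1}$), my plan is to contradict the maximality of $q$ by producing a complete $(q+1)$-partite subgraph of $G$ of the prescribed shape. Specifically, I will consider the $q+1$ pairwise disjoint independent sets $V_1,\ldots,V_{q-1},\{b\},I$, which consist of $q$ singletons together with one part of size $t$. Pairwise completeness is then routine to verify from the hypotheses: cross-edges among the $V_i$ with $i\leq q-1$ come from $Q$; every vertex of $\{b\}\cup I$ is complete to each $V_i$ with $i\leq q-1$ because $\{b\}\cup I\subseteq B$; and $b$ is complete to $I$ by choice of $I \subseteq N_B(b)$. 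Disjointness is also straightforward, as $B\subseteq N(Q)$ is disjoint from $V(Q)$ and $b\notin I$. This produces a complete $(q+1)$-partite subgraph meeting the lemma's conditions, contradicting the maximality of $q$.

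The main obstacle is spotting the right reconfiguration: the trick is to \emph{replace} $V_q$ by $I$ and insert $\{b\}$ as a new singleton part, rather than to append $\{b\}$ (or a vertex of $I$) to the original $Q$, which fails because no vertex of $B$ need be complete to $V_q$. Once this configuration is identified, every condition follows directly from the definitions of $B$ and of $I$; in particular, no appeal to $t$-broom-freeness is required, and the argument uses only Ramsey's theorem together with the maximality of $q$.
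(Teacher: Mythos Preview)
Your proof is correct and is essentially identical to the paper's argument: the paper observes that $\omega(G[N_{G[B]}(v)])\le \omega(G)-1$ (which is exactly your ``clique case'' dispatched by adjoining $v$), deduces an independent $t$-set $T\subseteq N_{G[B]}(v)$, and then builds the same complete $(q+1)$-partite graph $V_1,\ldots,V_{q-1},\{v\},T$ to contradict the maximality of $q$. The only cosmetic difference is that you phrase the Ramsey step as an explicit two-case split rather than a clique-number bound.
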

\begin{proof}
Suppose, otherwise, there exists $v\in B$ such that $d_{G[B]}(v)\geq R(t,\omega(G))$. Observe that  $\omega(G[N_{G[B]}(v)]) \leq \omega(G)-1$. Hence, $G[N_{G[B]}(v)]$ contains an independent set of size $t$, say $T$. By the definition of $B$, both $v$ and $T$ are complete to $V_j$ for all $j\in [q-1]$. Let $V_j'=V_j$ for $j\in [q-1]$, $V_q'=\{v\}$, and $V_{q+1}'=T$. Then $G[\displaystyle\cup_{j\in [q+1]}V_j']$ is an induced complete $(q+1)$-partite subgraph in $G$, contradicting the choice of $Q$.
\end{proof}

We also need to consider the vertices in $N(Q)$ that are mixed on the last part of $Q$ and not complete to some other part of 
$Q$. We can bound the number of such vertices. 

\begin{lemma}\label{lem:A}
Let $t,q$ be integers with $t\ge 2$ and $q\ge 2$. Let $G$ be a $t$-broom-free graph and $V_1,\ldots, V_q$ be pairwise disjoint independent sets in $G$, such that $|V_i|=1$ for $i\in[q-1]$, $|V_q|=t$, and $Q:=  G[\cup_{i\in [q]} V_i]$ is a complete $q$-partite graph. Let $$A:= \{ v\in N(Q): \mbox{ $v$ is mixed on $V_q$ and $v$ is not complete to $V(Q)\backslash V_q$}\}.$$ Then $|A|< t^2 \omega(G) R(t, \omega(G))$.
\end{lemma}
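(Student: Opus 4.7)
The plan is to cover $A$ by a controlled collection of sets $A_{b,b',i}$, each of bounded size, and then sum. For $b,b'\in V_q$ with $b\ne b'$ and $i\in[q-1]$, let $a_i$ denote the unique vertex of $V_i$ and define
$$A_{b,b',i}=\{v\in A:\ vb\in E(G),\ vb'\notin E(G),\ va_i\notin E(G)\}.$$
Because each $v\in A$ is neutral to $V_q$, it has both a neighbor and a non-neighbor in $V_q$; because $v$ is not complete to $V(Q)\setminus V_q$ and each $V_i$ with $i\in[q-1]$ is a singleton, $v$ misses at least one $a_i$. Hence $A=\bigcup A_{b,b',i}$ over all admissible triples $(b,b',i)$.

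The main step is to show $|A_{b,b',i}|<R(t,\omega(G))$. Since every vertex of $A_{b,b',i}$ is adjacent to $b$, any clique in $G[A_{b,b',i}]$ together with $b$ is a clique of $G$, so $\omega(G[A_{b,b',i}])\le\omega(G)-1$. Consequently, if $|A_{b,b',i}|\ge R(t,\omega(G))$, then $G[A_{b,b',i}]$ contains an independent set $T=\{v_1,\dots,v_t\}$. I would then verify that $(b,\,a_i b',\,T)$ is an induced $t$-broom in $G$, contradicting the hypothesis. The required adjacencies hold since $b$ is adjacent to $a_i$ (different parts of $Q$) and to each $v_j\in A_{b,b',i}$, and $a_i b'\in E(G)$ since $a_i\in V_i$ and $b'\in V_q$. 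For non-adjacencies: $bb'\notin E(G)$ because both lie in the independent set $V_q$; $v_j a_i\notin E(G)$ and $v_j b'\notin E(G)$ by the definition of $A_{b,b',i}$; and $T$ is independent by choice. No other pairs need be checked.

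Finally, the number of ordered pairs $(b,b')\in V_q\times V_q$ with $b\ne b'$ is $t(t-1)$, and there are $q-1$ choices for $i$. Since $Q$ contains a $K_q$, we have $q\le\omega(G)$, so
$$|A|\le t(t-1)(q-1)\,R(t,\omega(G))<t^2\,\omega(G)\,R(t,\omega(G)),$$
which is the desired bound. I do not anticipate a real obstacle here; the only point requiring care is the verification of the induced broom, and the multiplicative factor $\omega(G)$ in the final bound enters precisely through the trivial estimate $q\le\omega(G)$.
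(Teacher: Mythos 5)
Your proposal is correct and is essentially the paper's own argument: your sets $A_{b,b',i}$ are exactly the label classes $(a_q,b_q,j)$ the paper uses in its pigeonhole step, the broom $(b,\,a_i b',\,T)$ matches the paper's $(a_q,\,v_j b_q,\,T)$, and both proofs cap each class below $R(t,\omega(G))$ via the common neighbor $b$ forcing $\omega(G[A_{b,b',i}])\le\omega(G)-1$. Replacing the pigeonhole on a single chosen label per vertex with a union bound over a covering is only a cosmetic difference.
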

\begin{proof}
For each $i\in [q-1]$, let $v_i$ be the unique vertex in $V_i$. Let $v \in A$ be arbitrary. By definition there exists $j\in [q-1]$ and there exist $a_q, b_q \in V_q$, such that $v a_q \in E(G), v b_q \not\in E(G)$, and $v v_j \not\in E(G)$. If there are multiple such choices of $(a_q, b_q, j)$, we pick an arbitrary one and assign the vertex $v$ the label $(a_q, b_q, j)$. Since $q \leq \omega(G)$ and $|V_q| = t$, there are in total at most $t^2 \omega(G)$ such labels. 

Thus, if $|A| \geq t^2 \omega(G) R(t, \omega(G))$, then there exists a set $A' \subseteq A$ such that $|A'| \geq R(t, \omega(G))$ and all vertices in $A'$ receive the same label, say, $(a_q, b_q, j)$. Since all vertices in $A'$ are adjacent to $a_q$, it follows that $\omega(G[A'])<\omega(G)$. Hence, $G[A']$ must contain an independent set of size $t$, say $T$. Now $(a_q, v_j b_q, T)$ is an induced $t$-broom in $G$, a contradiction. 
\end{proof}

When $t=2$, we can substitute  Lemma~\ref{lem:A} with the following result. Its proof is the only place where we use the Strong Perfect Graph Theorem \cite{Chudnovsky-Seymour2005}: A graph is perfect if and only if it contains no odd hole or odd antihole. (Recall that a graph $G$ is {\it perfect} if $\chi(H)=\omega(H)$ for all induced subgraphs $H$ of $G$.)

\begin{lemma}\label{lem:A'}
Let $q$ be an integer with $q\ge 2$. Let $G$ be a chair-free graph and let $V_1,\ldots, V_q$ be pairwise disjoint independent sets in $G$, such that $|V_i|=1$ for $i\in [q-1]$, $|V_q|=2$, and $Q:=  G[\cup_{i\in [q]} V_i]$ is a complete $q$-partite graph. Let $V_q=\{v_q,v_q'\}$, $V_i=\{v_i\}$ for $i\in [q-1]$, and $$A:= \{ v\in N(Q): \mbox{ $v$ is mixed on $V_q$ and $v$ is not complete to $V(Q)\backslash V_q$}\}.$$  Then both $G[A\cap N(v_q)]$ and $G[A\cap N(v_q')]$ are perfect.
\end{lemma}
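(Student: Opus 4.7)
The plan is to apply the Strong Perfect Graph Theorem to $H := G[A \cap N(v_q)]$ (the case $G[A \cap N(v_q')]$ being symmetric): if $H$ is imperfect then it contains an induced odd hole or odd antihole $C$ of length $2k+1 \ge 5$, and the goal is to exhibit an induced chair in $G$, contradicting the hypothesis. The three resources driving the construction are $v_q$ itself (universally adjacent to $C$), $v_q'$ (universally nonadjacent to $C$ and to $v_q$), and, for any chosen $c_1 \in C$, some vertex $v_j \in \{v_1,\ldots,v_{q-1}\}$ with $v_j c_1 \notin E(G)$; crucially, $v_j$ is adjacent to both $v_q$ and $v_q'$ since $V_j$ and $V_q$ are distinct parts of the complete multipartite graph $Q$.

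After fixing such a triple, set $C' := \{c \in C : v_j c \notin E(G)\}$, which contains $c_1$. The first case to dispose of is when $C'$ contains two vertices $c, c'$ that are nonadjacent in $C$: then $(v_q, v_j v_q', \{c, c'\})$ is an induced chair whose four edges and six nonedges all follow from the multipartite structure of $Q$, the fact that every vertex of $A \cap N(v_q)$ is nonadjacent to $v_q'$, the definition of $C'$, and the choice of the pair $c, c'$. Otherwise $C'$ is a clique in $G[C]$, and I will construct a chair whose \emph{center} is $v_j$ and in which $v_q'$ serves as one of the two leaves. When $C$ is an odd hole, cliques in $C$ have size at most $2$; letting $c_a$ be the last vertex of $C'$ along the cycle (so $c_{a+1}, c_{a+3} \notin C'$), I will verify that $(v_j, c_{a+1} c_a, \{c_{a+3}, v_q'\})$ is an induced chair, using that distances $2$ and $3$ in a cycle of length at least $5$ yield nonedges.

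When $C$ is an odd antihole of length at least $7$, $C'$ being a clique in $G[C]$ means $C'$ is an independent set in the underlying cycle $C_{2k+1}$, so automatically $c_{a+1} \notin C'$ whenever $c_a \in C'$. If some $c_a \in C'$ also has $c_{a+2} \notin C'$, the induced chair is $(v_j, c_{a+2} c_a, \{c_{a+1}, v_q'\})$, since $c_{a+2}c_a$ is an antihole edge (distance $2$) while the two pairs of consecutive indices give antihole nonedges. The remaining and most delicate subcase is when $c_{a+2} \in C'$ for every $c_a \in C'$: then $C'$ is closed under the shift $a \mapsto a+2$ in $\mathbb{Z}_{2k+1}$, and because $\gcd(2, 2k+1) = 1$ this forces $C' = C$; any two consecutive vertices of $C$ are then nonadjacent in the antihole and both lie in $C'$, so the first case applies. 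This ``shift-closure'' step is the main obstacle to formalize cleanly; elsewhere the chair certificates are essentially forced by the hole/antihole combinatorics combined with the rigid multipartite structure provided by $Q$.
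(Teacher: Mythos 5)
Your proposal is correct and is essentially the paper's own argument: both invoke the Strong Perfect Graph Theorem and then destroy an odd hole or antihole in $G[A\cap N(v_q)]$ with induced chairs built from $v_q$ (complete to the hole), $v_q'$ (anticomplete to it), and a vertex $v_j$ nonadjacent to some hole vertex, with your antihole ``shift-closure'' step being exactly the paper's induction showing $a_{2j+1}\in A_{\sigma(1)}$ for all $j$ (the wrap-around $a_1a_{2k+1}$ contradiction is your $\gcd(2,2k+1)=1$ observation). The only cosmetic difference is that the paper first partitions $A\cap N(v_q)$ into sets $A_1,\dots,A_{q-1}$ by first non-neighbor among $v_1,\dots,v_{q-1}$ and proves each $A_i$ is a clique via the chair $(v_q, v_iv_q',\{u_1,u_2\})$, whereas you define the non-neighborhood $C'$ of a single chosen $v_j$ directly, so your Case 1 is the paper's clique claim in contrapositive form and you avoid the partition and the minimal-label device entirely.
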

\begin{proof}
Let $A'=A\cap N(v_q)$ and $A''=A\cap N(v_q')$. By the definition of $A$, $|N_G(v)\cap V_q|=1$ for all $v\in A$; so $A'\cap A''=\emptyset$ and $A'\cup A''=A$. By symmetry, it suffices to prove that $G[A']$ is perfect. 

We partition $A'$ into $q-1$ pairwise disjoint sets (possibly empty) as follows. Let $A_1:=\{v\in A': \textrm{ $vv_1\notin E(G)$}\}$. Suppose for some $i\in [q-1]$,  we have defined $A_1, \ldots, A_i$. If $i=q-1$, we are done; if $i<q-1$, let $A_{i+1}:= \{v\in A'\backslash \bigcup_{j\in [i]} A_j: \textrm{ $vv_{i+1}\notin E(G)$}\}$. Hence, by the definition of $A$, $A'=\cup_{i\in [q-1]} A_i$. 

Observe that if $i\ge 2$ and $A_i\ne \emptyset$, then for all $j\in [i-1]$, $v_j$ is complete to $A_i$.
We claim that for each $i\in [q-1]$ with $A_i\ne \emptyset$, $G[A_i]$ is a clique; indeed, suppose there exist $u_1, u_2\in A_i$ such that $u_1 u_2\notin E(G)$, then $(v_q, v_i v_q', \{u_1, u_2\})$ is an induced chair in $G$, a contradiction.

Now assume for a contradiction that $G[A']$ is not perfect. Then, by the Strong Perfect Graph Theorem, $G[A']$ contains an odd hole or an odd antihole. Let $\{a_1, a_2, \ldots, a_{2k+1}\}$ be the vertex set of an odd hole or odd antihole in $G[A']$ (so $k\ge 2$). For each $i\in [2k+1]$, since $a_i\in A$,  there exists $\sigma(i)\in [q-1]$ such that $a_i\in A_{\sigma(i)}$. By symmetry, we may assume that $\sigma(1)=\min \{\sigma (i): i\in [2k+1]\}$. 

Suppose $G[\{a_1, \ldots, a_{2k+1}\}]$ is an odd hole. Without loss of generality, assume for $i,j\in [2k+1]$, $a_ia_j\in E(G)$ if $|i-j|\equiv 1$ (mod $2k-1$), and $a_ia_j\notin E(G)$ otherwise. 
Since $a_2a_{2k+1}\notin E(G)$ and $G[A_{\sigma(1)}]$ is a clique, $a_2\notin A_{\sigma(1)}$ or $a_{2k+1}\notin A_{\sigma(1)}$. By symmetry, we may assume that $a_2\notin A_{\sigma(1)}$. Hence, $\sigma(2)>\sigma(1)$ and $a_2v_{\sigma(1)}\in E(G)$.
Similarly,  $a_{2k}\notin A_{\sigma(1)}$ as $a_{2k}a_1\notin E(G)$; so $a_{2k}v_{\sigma(1)}\in E(G)$. Since $a_1a_{2k},a_2a_{2k}\notin E(G)$, $(v_{\sigma(1)}, a_2a_1, \{a_{2k},v_{q}'\})$ is an induced chair in $G$, a contradiction.

Thus, $G[\{a_1, \ldots, a_{2k+1}\}]$ is an odd antihole. Without loss of generality, we may assume that, for $i,j\in [2k+1]$, $a_ia_j\notin E(G)$ if $|i-j|\equiv 1$ (mod $2k-1$), and $a_ia_j\in E(G)$ otherwise. Note that $a_1a_3\in E(G)$ and $a_1a_2, a_2a_3\notin E(G)$. Hence $a_2\notin A_{\sigma(1)}$ and $a_2v_{\sigma(1)}\in E(G)$. If $a_3\notin A_{\sigma(1)}$, then $a_3v_{\sigma(1)}\in E(G)$; now $(v_{\sigma(1)}, a_3a_1, \{a_{2},v_q'\})$ is an induced chair in $G$, a contradiction.
So $a_3\in A_{\sigma(1)}$. Proceeding inductively, we see that $a_{2j+1}\in A_{\sigma(1)}$ for all $j\in [k]$. Thus $a_1a_{2k+1}\in E(G)$ as $G[A_{\sigma(1)}]$ is a clique. This gives a contradiction since $a_1a_{2k+1}\notin E(G)$. 
\end{proof}

We now prove a lemma about \{$t$-broom, $K_{t,t}$\}-free graphs, using a similar idea as in the proof of Lemma \ref{lem:A'} above, and show that  $\chi(G[A])$ admits a degenerate structure. A graph $H$ is said to be \textit{$d$-degenerate}, where $d$ is a positive integer, if the vertices of $H$ may be labeled as $u_1,\ldots, u_n$ such that $|N_H(u_{i})\cap \{u_{i+1}, \ldots, u_{n}\}|\le d$ for all $i\in [n-1]$. It is not hard to see that the chromatic number of a $d$-generate graph is at most $d+1$ by coloring its vertices greedily with respect to this ordering.

\begin{lemma}\label{lem:KttA}
Let $t,q$ be integers with $t\ge 2$ and $q\ge 2$. Let $G$ be a \{$t$-broom, $K_{t,t}$\}-free graph and let $V_1,\ldots, V_q$ be pairwise disjoint independent sets in $G$, such that $|V_i|=1$ for $i\in [q-1]$, $|V_q|=t$, and $Q:=  G[\cup_{i\in [q]} V_i]$ is a complete $q$-partite graph. Let $V_i=\{v_i\}$ for $i\in [q-1]$, let $a_1,a_2\in V_q$, and 
let $$A:= \{ v\in N(Q): \mbox{ $va_1\in E(G)$, $va_2\notin E(G)$, and $v$ is not complete to $V(Q)\backslash V_q$}\}.$$  Then there exists $X\subseteq A$ with $|X|\le \omega(G) (t+2) R(t-1,\omega(G))$ such that $G[A\backslash X]$ is $(2R(t,\omega(G))-1)$-degenerate. 
\end{lemma}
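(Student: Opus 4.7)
The plan imitates the proofs of Lemmas~\ref{lem:A} and \ref{lem:A'}, replacing the Strong Perfect Graph Theorem by a $K_{t,t}$-free Ramsey argument. For $v \in A$, set $\sigma(v) := \min\{j \in [q-1] : v v_j \notin E(G)\}$ and partition $A = \bigsqcup_{i \in [q-1]} A_i$ with $A_i := \sigma^{-1}(i)$; let $B_j := \{w \in A : wv_j \notin E(G)\}$. Two baseline bounds come from the $t$-broom hypothesis. First, if $T \subseteq A_i$ is independent of size $t$, then $(a_1, v_i a_2, T)$ is an induced $t$-broom, so $\alpha(G[A_i]) \le t-1$; combined with $\omega(G[N_G(u) \cap A_i]) \le \omega(G)-1$ for any $u \in A_i$, Ramsey gives $|N_G(u) \cap A_i| < R(t, \omega(G))$. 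Second, for $v \in A$ and $j$ with $vv_j \in E(G)$, an independent $T \subseteq N_G(v) \cap B_j$ of size $t$ would produce the induced $t$-broom $(v, v_j a_2, T)$; hence $|N_G(v) \cap B_j| < R(t, \omega(G))$, and in particular $|N_G(v) \cap A_j| < R(t, \omega(G))$.

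The only configuration these bounds miss is $v \in A_i$ with $j > i$ and $vv_j \notin E(G)$, and here I invoke the $K_{t,t}$-free hypothesis. When $|N_G(v) \cap A_j| \ge R(t, \omega(G))$, Ramsey applied inside $N_G(v) \cap A_j$ (which has clique number at most $\omega(G)-1$ since $v$ is adjacent to all of it) produces an independent $T$ of size $t$. Now any independent $W \subseteq N_G(T) \setminus N_G[v]$ of size $t-1$ would combine with $v$ to form an independent set $W \cup \{v\}$ of size $t$ complete to $T$, creating a forbidden $K_{t,t}$. Therefore $\alpha(G[N_G(T) \setminus N_G[v]]) \le t-2$ and Ramsey gives $|N_G(T) \setminus N_G[v]| < R(t-1, \omega(G))$.

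Using this estimate as the charging unit, I construct $X$ greedily, one layer at a time. For each $i \in [q-1]$, while there exist $v \in A_i \setminus X$ and $j > i$ with $vv_j \notin E(G)$ and $|N_G(v) \cap (A_j \setminus X)| \ge R(t, \omega(G))$, I choose such a pair $(v, j)$ together with a corresponding independent $T$ of size $t$ in $N_G(v) \cap (A_j \setminus X)$, and add $\{v\} \cup T \cup \big( N_G(T) \cap A \setminus N_G[v] \big)$ to $X$; each batch adds at most $1 + t + R(t-1, \omega(G))$ vertices. The common-neighborhood estimate above closes each layer after a bounded-in-$t$ number of batches, yielding a per-layer contribution of at most $(t+2) R(t-1, \omega(G))$ and hence $|X| \le \omega(G)(t+2) R(t-1, \omega(G))$ after summing over $i \in [q-1]$. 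Finally, I order $A \setminus X$ by increasing $\sigma$-value (ties broken arbitrarily); for $v \in A_i \cap (A \setminus X)$, the forward neighbors split into at most $R(t, \omega(G))-1$ vertices inside $A_i$ (by the within-layer bound) and at most $R(t, \omega(G))-1$ across $A_{>i}$ (the with-edge neighbors are controlled by the $B_j$-bound applied to the residual layers, while the no-edge neighbors have been absorbed into $X$), so the forward degree is $\le 2R(t, \omega(G))-1$ and $G[A \setminus X]$ is $(2R(t, \omega(G))-1)$-degenerate.

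The main obstacle will be verifying simultaneously that the greedy construction of $X$ terminates within the claimed budget of $\omega(G)(t+2)R(t-1,\omega(G))$ vertices and that the cross-layer degree in the degeneracy step collapses into a single $R(t,\omega(G))-1$ bound rather than accumulating one $R(t,\omega(G))$ per layer $j > i$; both require carefully chaining the $K_{t,t}$-free common-neighborhood estimate so that every exceptional cross-layer structure is captured in $X$ after its associated batch, and this bookkeeping is the technical heart of the proof.
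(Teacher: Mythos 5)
Your set-up — the layer partition $A=\cup_{i}A_i$ by the first non-neighbor $v_{\sigma(v)}$, the bound $\alpha(G[A_i])<t$ via the broom $(a_1,v_ia_2,T)$, and hence $|A_i|<R(t,\omega)$ since $a_1$ is complete to $A_i$ — coincides with the paper's. But the two load-bearing steps of your construction fail. First, the batch-size bound: in the paper's notation $N_G(T)$ is the set of vertices with \emph{a} neighbor in $T$, and $|N_G(T)\cap A\setminus N_G[v]|$ admits no $R(t-1,\omega)$ bound; your $K_{t,t}$ argument only controls the \emph{common} neighborhood $\bigcap_{y\in T}N(y)\setminus N[v]$, because an independent $(t-1)$-set $W$ there must be complete to $T$ for $W\cup\{v\}$ and $T$ to induce a $K_{t,t}$. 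Vertices adjacent to some but not all of $T$ escape both your estimate and your batch, and these are exactly what the paper handles with a second, broom-based estimate: for each single $y\in T$, the current slab's vertices nonadjacent to both the pivot $x_k$ and $y$ number fewer than $R(t-1,\omega)$, since otherwise $(v_{j_k},\,y x_k,\,T'\cup\{a_2\})$ is an induced $t$-broom — this needs $y\in A_{>j_k}$ (so $yv_{j_k}\in E(G)$) and $x_k\in A_{j_k}$ (so $x_kv_{j_k}\notin E(G)$), structure your batches do not set up. The $t$ broom bounds plus one $K_{t,t}$ pigeonhole give the paper's $(t+1)R(t-1,\omega)$ slab bound. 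Second, and more fundamentally, termination within budget: you assert each layer ``closes after a bounded-in-$t$ number of batches,'' but nothing in your greedy forces this — a single layer can hold up to $R(t,\omega)-1$ pivots, each triggering against a different target layer, and distinct pivots need not interact at all. The paper's budget comes from a different mechanism: the deletion process is \emph{nested}, with $H_k=(H_{k-1}\cap A_{\ge j_k})\cap N(x_k)$, so every later pivot lies in every earlier $H_k\subseteq N(x_k)$; hence $x_1,\dots,x_s$ is a clique, $s<\omega$, and only $s$ slabs $X_k$, each of size at most $(t+2)R(t-1,\omega)$, are ever deleted. Your construction has no analogue of this clique bound.

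The degeneracy step has the same per-layer-versus-total problem, which you flag but do not resolve. Your bounds $|N(v)\cap B_j|<R(t,\omega)$ (valid, via the broom $(v,v_ja_2,T)$, whenever $vv_j\in E(G)$) and the post-deletion bound $|N(v)\cap(A_j\setminus X)|<R(t,\omega)$ are each for a \emph{fixed} $j>i$; summing over the up to $\omega$ layers ahead yields only forward degree $O(\omega R(t,\omega))$, not $2R(t,\omega)-1$. Nor can a single cross-layer Ramsey bound be extracted: a broom $(v,v_ja_2,T)$ requires one $v_j$ nonadjacent to all of $T$, which fails when $T$ mixes layers, since $u\in A_{\sigma(u)}$ is adjacent to every $v_j$ with $j<\sigma(u)$. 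The paper instead obtains the uniform tail bound $|N(u)\cap H_{k-1}\cap A_{\ge \sigma(u)}|\le 2R(t,\omega)-1$ for surviving vertices \emph{by fiat of the process}: any vertex violating it would itself have been chosen as a pivot, by the minimality of $j_k$ (or by termination at $H_s$), and the layer ordering then places all forward neighbors of a surviving vertex inside that one tail. So the correct proof hinges on the nested pivot process doing double duty — bounding $|X|$ through the clique of pivots, and enforcing the degeneracy bound through the minimal choice of $j_k$ — and both halves are missing from your proposal.
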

\begin{proof} 
We partition $A$ into $q-1$ pairwise disjoint sets (possibly empty) as follows. Let $A_1:=\{v\in A: \textrm{ $vv_1\notin E(G)$}\}$. Suppose for some $i\in [q-1]$, we have defined $A_1, \ldots, A_i$. If $i=q-1$, we are done; if $i<q-1$, let $A_{i+1}:= \{v\in A \backslash \bigcup_{j\in [i]} A_j: \textrm{ $vv_{i+1}\notin E(G)$}\}$. By the definition of $A$, we have $A=\cup_{i\in [q-1]} A_i$ and, if $i\ge 2$ and $A_i\ne \emptyset$, then $A_i$ is complete to $\{v_j\}$ for all $j\in [i-1]$. For simplicity of notation, if $x \in A_i$ for some $i \in [q-1]$, we use $A_x$ to refer to $A_i$.

Let $\omega:=\omega(G)$. Since $a_1$ is complete to $A$, $\omega(G[A])\le \omega-1$. Observe that $\alpha(G[A_i])<t$; for, if $T$ is an independent set of size $t$ in $G[A_i]$ then $(a_1, v_i a_2, T)$ is an induced $t$-broom in $G$, a contradiction. Thus, $|A_i|<  R(t,\omega)$.

We define a \textit{pre-order} $(A,\preceq)$ (i.e., a binary relation that is reflexive and transitive) on vertices in $A$. For any two vertices $u \in A_i$ and $v \in A_j$, let $u 
\preceq v$ if $i\leq j$. 
Moreover, we say $u\prec v$ if $i<j$. 
For any vertex $x \in A$, define $F_x = \{y \in A: x \preceq y\}$ and $F_{>x} = \{y\in A: x \prec y\}$.

Suppose for all $x \in A$, $|N(x)\cap F_x|\le 2R(t, \omega)-1$. Consider an ordering $u_1,\ldots, u_{|A|}$ of the vertices in $A$ such that $u_i \preceq u_{i+1}$ for all $i\in [|A|-1]$.  By the above assumption, $|N(u_i) \cap \{u_{i+1}, \ldots, u_{|A|}\}|\leq |N(u_i) \cap F_{u_i}| \leq 2R(t,\omega)-1$ for all $i\in [|A|-1]$. Thus, $G[A]$ is $(2R(t,\omega)-1)$-degenerate, and the assertion of Lemma \ref{lem:KttA} holds with $X= \emptyset$.

So we may assume that there exists some $x_1 \in A$ such that $|N(x_1)\cap F_{x_1}|\ge 2R(t, \omega)$. Choose a minimal such $x_1$ (with respect to $\preceq$). Let $H_1=N(x_1)\cap F_{x_1}$. Suppose for some positive integer $k\ge 2$, we have defined $H_1,\ldots, H_{k-1}$ such that $H_i\subseteq N(x_i)\cap F_{x_i}$ for all $i\in [k-1]$. If $|N_{H_{k-1}}(x) \cap F_x|\le 2R(t,\omega)-1$ for all $x\in H_{k-1}$, then we terminate this process. Otherwise pick a minimal $x_k \in H_{k-1}$ (with respect to $\preceq$) such that $|N_{H_{k-1}}(x_k) \cap F_{x_k}| \geq 2R(t,\omega)$. Then let $H_k:= N_{H_{k-1}}(x_k) \cap F_{x_k}$. 
When the above process terminates, we obtain a sequence, say  $(x_k, H_k), k=1,\ldots,s$. Note that $\{x_k: k\in [s]\}$ induces a clique in $G$. Moreover, by definition, each $x_k$ is adjacent to $a_1 \in V_q$. Hence $s < \omega$.

Let $X := \cup_{k\in [s]} \lp(H_{k-1}\backslash H_{k})\cap F_{x_k}\rp$ where $H_0=A$. Then $A\backslash X=\bigcup_{k\in [s+1]} (H_{k-1}\backslash F_{x_k})$, where $F_{x_{s+1}}=\emptyset$. 
It suffices to show that $|(H_{k-1}\backslash H_{k})\cap F_{x_k}|\leq (t+2) R(t-1, \omega)$ for all $k\in[s]$ and that $G[A\backslash X]$ is $(2R(t,\omega)-1)$-degenerate.

First we consider $G[A\backslash X]$. For $k\in[s]$, by the minimality of $x_k$ (with respect to $\preceq$), for all $u \in H_{k-1}$ such that $u \prec x_k$, we have $|N_{H_{k-1}}(u) \cap F_{u}| \leq 2R(t,\omega)-1$. Observe that $N_{A\backslash X}(u) \cap F_u \subseteq N_{H_{k-1}}(u) \cap F_{u}$. Hence $|N_{A\backslash X}(u) \cap F_{u}| \leq 2R(t,\omega)-1$. 
By the terminating condition, $|N_{A\backslash X}(u) \cap F_u| \leq |N_{H_s}(u) \cap F_u|\leq 2R(t,\omega)-1$ for all $u\in H_s$.
Therefore, we can order the vertices in $A\backslash X$ as $u_1,\ldots, u_{|A \backslash X|}$, such that $u_i \preceq u_{i+1}$ for all $i\in [|A\backslash X|-1]$; then $|N(u_i) \cap \{u_{i+1}, \ldots, u_{|A \backslash X|}\}|\leq 2R(t,\omega)-1$ for all $i\in [|A\backslash X|-1]$. Thus, $G[A\backslash X]$ is $(2R(t,\omega)-1)$-degenerate.

To show that $|(H_{k-1}\backslash H_{k})\cap F_{x_k}| \leq (t+2)R(t-1,\omega)$ for any $k\in [s]$, we see that no vertex in $(H_{k-1}\backslash H_{k})\cap F_{x_k}$ is adjacent to $x_k$. So it suffices to bound the number of non-neighbors of $x_k$ in $H_{k-1}\cap F_{x_k}$.
Note that $F_{x_k}=A_{x_k}\cup F_{>x_k}$. Observe that $x_k$ has at most $R(t-1,\omega)-1$ non-neighbors in $A_{x_k}$; otherwise the graph induced on the vertex set of non-neighbors of $x_k$ in $A_{x_k}$ has an independent set of size $t-1$ and this implies that $\alpha(G[A_{x_k}])\ge t$, a contradiction. Thus, it suffices to bound the number of the non-neighbors of $x_k$ in $H_{k-1}\cap F_{>x_k}$, i.e., $|(H_{k-1}\cap F_{>x_k})\backslash N(X_k)|$, from above by $(t+1)R(t-1,\omega)$.

Recall that $x_k$ has at least $2R(t,\omega)$ neighbors in $H_{k-1}\cap F_{x_k}$ (as $|N_{H_{k-1}}(x_k)\cap F_{x_k}|\geq 2R(t,\omega)$).
Since $|A_{x_k}|< R(t,\omega)$, $|N_{H_{k-1}}(x_k)\cap F_{> x_k}|\geq R(t,\omega)$. Hence there exists an independent set $Y_k$ of size $t$ in $G[N_{H_{k-1}}(x_k)\cap F_{> x_k}]$. 

We claim that for each $y \in Y_k$, $y$ and $x_k$ has at most $R(t-1, \omega)$ common non-neighbors in $H_{k-1}\cap F_{> x_k}$. Otherwise there is an independent set $T$ of size $t-1$ in $G[(H_{k-1} \cap F_{>x_k})\backslash (N(x_k) \cup N(y))]$. Let $u$ be the vertex in $\{v_1, \ldots, v_{q-1}\}$ that is anticomplete to $A_{x_k}$ and complete to $F_{>x_k}$. Then $(u, y x_k, T\cup \{a_2\})$ is an induced $t$-broom in $G$, a contradiction.

Thus, each $y\in Y_k$ has at most $R(t-1,\omega)$ non-neighbors in $(H_{k-1} \cap F_{>x_k}) \backslash N(x_k)$. Suppose  $|(H_{k-1} \cap F_{>x_k}) \backslash N(x_k)| \geq (t+1) R(t-1,\omega)$. Then there exists a set $S \subseteq (H_{k-1} \cap F_{>x_k})\backslash N(x_k)$ with $|S|\geq (t+1) R(t-1,\omega)-|Y_k|R(t-1, \omega)=R(t-1,\omega)$ such that $S$ is complete to $Y_k$. Since $|S|\ge R(t-1, \omega)$,  $G[S]$ has an independent set of size $t-1$, say $S'$.  Now $(S'\cup \{x_k\}) \cup Y_k$ induces a $K_{t,t}$ in $G$, a contradiction. Hence, $|(H_{k-1} \cap F_{>x_k}) \backslash N(x_k)| < (t+1) R(t-1,\omega)$.
\end{proof}

\section{Proofs of Theorems \ref{thm:broom_binding_function} and \ref{thm:chair_binding_function}}\label{sec:main_theorems}
When $t=1$, a $t$-broom is a path on $4$ vertices. So Theorem \ref{thm:broom_binding_function} holds for $t=1$ since a $P_4$-free graph is perfect. Hence we may assume $t\geq 2$.
To prove Theorems \ref{thm:broom_binding_function} and \ref{thm:chair_binding_function}, we apply induction on $\omega(G)$. The proofs are the same, except in the case when $t=2$ we bound $\chi(G[A])$ by using Lemma~\ref{lem:A'} (instead of Lemma~\ref{lem:A}). 

Let $f(\omega)$ be a convex function satisfying
\begin{itemize}
    \item $R(t,\omega) \leq f(\omega)$ and $1\leq f(1)$, and
    \item $\lp t^2\omega R(t,\omega)+5R(t,\omega) \rp$  + $f(\omega-1)+f(1) \le f(\omega)$.
\end{itemize} 
By the generalized binomial theorem, we may choose $f(\omega)$ to be $C_t \omega^2 R(t,\omega)$ for some sufficiently large $C_t$ depending on $t$. Hence, using the upper bound of $R(t,\omega)$, $f(\omega)$ may be chosen such that $f(\omega) = o(\omega^{t+1})$.  

We will show that $\chi(G)\le f(\omega(G))$.
Note that the assertions of Theorems \ref{thm:broom_binding_function} and \ref{thm:chair_binding_function} are clearly true when $\omega(G) = 1$. Hence, let $G$ be a $t$-broom-free graph with $\omega(G) = \omega \geq 2$, and assume that for all $t$-broom-free graphs $H$ with $\omega(H)<\omega$, we have $\chi(H)\leq f(\omega(H))$. 

We choose pairwise disjoint independent sets $V_1, \ldots, V_q$ in $G$, such that 
\begin{itemize}
  \item [(1)] $|V_q|=t$ and $|V_i|=1$ for $i\in [q-1]$,  
  \item [(2)] $Q:=G[\displaystyle\cup_{i\in [q]}V_i]$ is a complete $q$-partite graph, and
  \item [(3)] subject to (1) and (2), $q$ is maximum.
\end{itemize}
Note that $q \geq 2$, otherwise $G$ is $K_{1,t}$-free; so $\Delta(G) < R(t,\omega)$ (hence $\chi(G)\le R(t,\omega)\leq f(\omega)$) and we are done. 
Clearly,  $q\le \omega$.  

We study the structure of $G$ by partitioning $G$ into several vertex disjoint subgraphs and bounding the chromatic number of each subgraph. 
We partition $N(Q)$ as follows: 
\begin{itemize}
\item $A:= \{ v\in N(Q): \textrm{ $v$ is mixed on $V_q$ and $v$ is not complete to $V(Q)\backslash V_q$}\}$.

\item $B:= \{ v\in N(Q): \textrm{ $v$ is mixed on $V_q$ and $v$ is  complete to $V(Q)\backslash V_q$}\}$.

\item $C:=N(Q)\backslash (A\cup B)$.
\end{itemize}
By the maximality of $q$, no vertex in $N(Q)$ is complete to $V(Q)$. Thus, for any $v\in C$, either $v$ is anticomplete to $V_q$, or $v$ is complete to $V_q$ and not complete to $V(Q)\backslash V_q$.
Note that $$V(G)=V(Q)\cup N(Q)\cup N^{\geq 2}(Q) \mbox{ and } N(Q)=A\cup B\cup  C.$$ 

Since there is no edge between $Q$ and $N^{\geq 2}(Q)$, we can color $Q$ and $G[N^{\geq 2}(Q)]$ with at most $\max \{\chi(Q), \chi(G[N^{\geq 2}(Q)])\}$ colors.
Since $\chi(Q)=q$ and $\chi(G[N^{\geq 2}(Q)]) \leq 3R(t,\omega)$ by Lemma \ref{lem:N2(Q)_and_3beyond}, we then obtain the following claim.
\begin{claim}\label{cl:chi(G)}
$\chi(G) \leq \max \{q, 3R(t,\omega)\} + \chi(G[A]) + \chi(G[B]) + \chi(G[C]).$
\end{claim}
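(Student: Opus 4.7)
The claim is essentially a routine application of how chromatic number behaves under partitioning the vertex set, and the plan is to build a proper coloring of $G$ by pasting together colorings of the five parts of the partition $V(G) = V(Q) \cup N^{\ge 2}(Q) \cup A \cup B \cup C$.

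The key observation is that there are no edges between $V(Q)$ and $N^{\ge 2}(Q)$: by definition every vertex of $N^{\ge 2}(Q)$ is at graph distance at least $2$ from $Q$, so it has no neighbor in $V(Q)$. Consequently, a proper coloring of $G[V(Q) \cup N^{\ge 2}(Q)]$ can be obtained by properly coloring $Q$ and $G[N^{\ge 2}(Q)]$ separately and identifying the two palettes, which uses exactly $\max\{\chi(Q), \chi(G[N^{\ge 2}(Q)])\}$ colors. Since $Q$ is a complete $q$-partite graph, $\chi(Q) = q$, so this quantity equals $\max\{q, \chi(G[N^{\ge 2}(Q)])\}$.

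Next I would color $G[A]$, $G[B]$, $G[C]$ with three new, mutually disjoint palettes of sizes $\chi(G[A])$, $\chi(G[B])$, $\chi(G[C])$, respectively, each also disjoint from the palette used on $V(Q) \cup N^{\ge 2}(Q)$. Combining these four colorings yields a well-defined coloring of all of $V(G)$ (since $V(G)$ is the disjoint union of $V(Q)$, $N^{\ge 2}(Q)$, $A$, $B$, $C$), and it is proper because every edge of $G$ has both endpoints in some single part of the partition or joins $V(Q)$ to $A \cup B \cup C$ or $N^{\ge 2}(Q)$ to $A \cup B \cup C$; in each case the two endpoints lie in parts that were assigned disjoint palettes, so they receive different colors.

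There is no real obstacle here; the claim is just the inequality $\chi(G) \le \chi(G[V(Q) \cup N^{\ge 2}(Q)]) + \chi(G[A]) + \chi(G[B]) + \chi(G[C])$ together with the edge-free join bound $\chi(G[V(Q) \cup N^{\ge 2}(Q)]) \le \max\{q, \chi(G[N^{\ge 2}(Q)])\}$. The only thing worth stating carefully is the partition $N(Q) = A \cup B \cup C$, which follows immediately from the definitions of $A$, $B$, $C$.
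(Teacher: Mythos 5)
Your proof is correct and takes essentially the same approach as the paper, which states the claim as immediate from the partition $V(G)=V(Q)\cup N^{\geq 2}(Q)\cup A\cup B\cup C$, the fact that $\chi(Q)=q$, and the absence of edges between $V(Q)$ and $N^{\geq 2}(Q)$ --- exactly the palette-identification argument you spell out. One trivial slip: your case analysis of edges omits those joining two of the sets $A$, $B$, $C$, but these are already handled by the disjoint-palette observation you invoke, so nothing is genuinely missing.
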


Since $\Delta(G[B]) < R(t, \omega)$ (by Lemma \ref{lem:B}), we have $\chi(G[B])\le R(t,\omega)$. By Lemmas \ref{lem:A} and \ref{lem:A'}, $\chi(G[A])\le t^2 \omega R(t, \omega)$ (when $t\ge 3$) and 
$\chi(G[A])\le 2\omega$ (when $t=2$). 
Thus, in view of Claim \ref{cl:chi(G)}, we need to bound $\chi(G[C])$. 

Let $Z = N(V_q)\cap C$ and $W=C\backslash Z$; so $Z$ is complete to $V_q$ and $W$ is anticomplete to $V_q$. We consider $G[W]$ first. For any component $X$ of $G[W]$, if $I$ is the set of all its neighbors in $V(Q)\backslash V_q$ (note that $ I \subseteq \cup_{i\in[q-1]} V_i$ and $I\ne \emptyset$) then by (i) of Lemma \ref{lem:W}, $V(X)$ is complete to $I$ and anticomplete to $\cup_{i\in[q-1]}V_i \backslash I$. It follows that any component of $G[W]$ has clique number at most $\omega-1$. Thus, each component of $G[W]$ with independent number at most $t-1$ has at most $R(t, \omega)-1$ vertices. 
Let $X_0$ denote the union of all components of $G[W]$ with independence number at most $t-1$; so $\chi(X_0)\le R(t, \omega)$.

\begin{claim}\label{cl:DminusW0}
$\chi(G[C]- X_0])\leq f(\omega-1) + f(1)$. 
\end{claim}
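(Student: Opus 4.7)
The plan is to color $G[C\setminus W_0]$ with a palette of size $f(\omega-1)+f(1)$ by first coloring $G[Z]$ in a structured way using at most $f(\omega-1)$ colors and then extending to each component of $G[W\setminus W_0]$ while avoiding the colors already appearing on the associated $Y_j$. The key inputs will be Lemma~\ref{lem:W}, which yields a complete-join decomposition of $G[Z]$, and the convexity of $f$.

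First I will set up the structure. Let $X_1,\ldots,X_m$ be the components of $G[W\setminus W_0]$; by definition of $W_0$ each $X_j$ satisfies $\alpha(X_j)\ge t$. Lemma~\ref{lem:W}(i)--(iii) gives that the $X_j$'s are pairwise anticomplete, every $z\in Z$ is either complete or anticomplete to each $V(X_j)$, and, setting $Y_j:=Z_{X_j}$, $Y_j$ is complete to $Z\setminus Y_j$. For each $\sigma\colon [m]\to\{0,1\}$ set $Z_\sigma:=\{z\in Z : z\in Y_j\iff\sigma(j)=1\}$; these sets partition $Z$, and for distinct $\sigma,\tau$ one applies Lemma~\ref{lem:W}(iii) at any $j$ with $\sigma(j)\ne\tau(j)$ to conclude that $Z_\sigma$ is complete to $Z_\tau$. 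Hence $G[Z]$ is a complete join of the subgraphs $G[Z_\sigma]$, and similarly $G[Y_j]$ is the complete join of those $G[Z_\sigma]$ with $\sigma(j)=1$. Since $V(X_j)$ is complete to $Y_j$, $\omega(X_j)+\omega(Y_j)\le\omega$; and since $Z$ is complete to $V_q$, an independent set of size $t\ge 2$, $\omega(G[Z])\le\omega-1$.

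Now I construct the coloring. By the inductive hypothesis on $\omega$, $\chi(G[Z_\sigma])\le f(\omega(Z_\sigma))$ and $\chi(V(X_j))\le f(\omega(X_j))$. I color each $G[Z_\sigma]$ using pairwise disjoint palettes $P_\sigma\subseteq\{1,\ldots,f(\omega-1)\}$ of size $\chi(G[Z_\sigma])$; the join structure gives $\sum_\sigma\chi(G[Z_\sigma])=\chi(G[Z])\le f(\omega-1)$, so this is feasible. Let $\phi(Y_j):=\bigcup_{\sigma(j)=1}P_\sigma$, so $|\phi(Y_j)|=\chi(G[Y_j])\le f(\omega(Y_j))$. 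For each $j$, color $V(X_j)$ properly using $\chi(V(X_j))$ colors drawn from $\{1,\ldots,f(\omega-1)+f(1)\}\setminus\phi(Y_j)$; this palette has size at least $f(\omega-1)+f(1)-f(\omega(Y_j))$. Using convexity of $f$ (in the form $f(a)+f(b)\le f(1)+f(a+b-1)$ for $a,b\ge 1$) together with $\omega(X_j)+\omega(Y_j)\le\omega$, I obtain $f(\omega(X_j))+f(\omega(Y_j))\le f(\omega-1)+f(1)$, so the available palette contains at least $f(\omega(X_j))\ge\chi(V(X_j))$ colors (the case $\omega(Y_j)=0$ being immediate).

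Finally, the resulting coloring is proper: $G[Z]$ and each $V(X_j)$ are properly colored by construction; edges between $V(X_j)$ and $Y_j$ are handled because $V(X_j)$ avoids $\phi(Y_j)$; and by Lemma~\ref{lem:W}(i)--(ii) there are no edges between $V(X_j)$ and $V(X_{j'})$ for $j\ne j'$, nor between $V(X_j)$ and $Z\setminus Y_j$. Therefore $\chi(G[C\setminus W_0])\le f(\omega-1)+f(1)$. The main technical hurdle is arranging a single coloring of $G[Z]$ so that $|\phi(Y_j)|\le f(\omega(Y_j))$ holds for \emph{every} $j$ simultaneously; the complete-join structure on $Z$ furnished by Lemma~\ref{lem:W}(iii), refined by the $Z_\sigma$ partition, is precisely what makes this possible in one coloring, after which the convexity inequality closes the argument.
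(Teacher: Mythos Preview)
Your proof is correct and follows essentially the same strategy as the paper: partition $Z$ into pairwise-complete blocks via the common refinement of the bipartitions $\{Z_{X_j},Z\setminus Z_{X_j}\}$ supplied by Lemma~\ref{lem:W}(iii), color the blocks with disjoint palettes, then color each component $X_j$ avoiding the colors on $Y_j$ and close with convexity. The paper phrases the accounting slightly differently (it reserves an extra color set $R$ rather than fixing the total palette at $f(\omega-1)+f(1)$), but the decomposition, the use of Lemma~\ref{lem:W}, and the convexity step are the same.
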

\begin{proof} For any component $X$ of $G[W]-X_0$, let $$Z_{X}:=\{z\in Z:\textrm{ $z$ is complete to $V(X)$}\}.$$
Let $S_1, S_2,  \ldots, S_p$ be the sets that form the smallest partition of $Z$ which refines all the bipartitions $Z_{X}, Z\backslash Z_{X}$ of $Z$ for all components $X$ of $G[W]-X_0$.
By (iii) of Lemma \ref{lem:W},  $S_i$ is complete to $S_j$ for all distinct $i, j\in [p]$. Let $\omega_i:=\omega(G[S_i])$ for $i\in [p]$. For each component $X$ of $G[W]-X_0$, let $F_{X}:= \{k \in [p]: \textrm{ $S_k$ is anticomplete to $V(X)$}\}$. Then by (ii) of Lemma \ref{lem:W}, $[p]\backslash F_{X}=\{k\in [p]: S_k \mbox{ is complete to } V(X)\}$. 

Observe that $\omega(G[Z]) < \omega$ and $\omega(X) < \omega$ for every component $X$ of $G[W]-X_0$. Hence the induction hypothesis applies to all subgraphs of $G[Z]$ and all $G[X]$.
We now describe, inductively, a coloring of $G[C]-X_0$ and show it uses at most $f(\omega-1) +f(1)$ colors.
\begin{itemize}
    \item For each $i \in [p]$, inductively color the vertices of $G[S_i]$ with colors from a set $R_i$, where $|R_i| \leq f(\omega_i)$. We choose $R_i$, $i\in [p]$, such that $R_i\cap R_j=\emptyset$ whenever $i\ne j$.
    
    \item  Let $R$ be a fixed set of colors, such that $R$ is disjoint from $\cup_{i\in [p]}R_i$ and $|R|=\max\{ f(\omega(X)) - \sum_{k\in F_{X}} |R_k| ,0\}$, where the maximum is taken over all components $X$ of $G[W]-X_0$.

    \item For each component $X$ of $G[W]-X_0$, it follows from induction that $\chi(X)\le f(\omega(X))$. Note that the vertex set of $X$ is anticomplete to $S_k$ if $k\in F_X$. Thus we can use the colors used on $\cup _{k\in F_X} G[S_k]$ (i.e., colors in $\cup_{k\in F_X} R_k$) to color $X$ first. If $\chi(X)\le \sum_{k\in F_{X}} |R_k|$, we are done. Otherwise we use at most $f(\omega(X)) - \sum_{k\in F_{X}} |R_k|$ colors from $R$ to color $X$. Hence for each component $X$ of $G[W]-X_0$, we assign the vertices of $X$ with colors from $\displaystyle\cup_{k\in F_{X}}R_k$ and, if needed, some additional colors from $R$.  
\end{itemize}

Therefore, we have $\chi(G[C]- X_0)=\chi(G[Z]\cup (G[W] -X_0))\leq \sum_{i\in [p]} |R_i| +|R|$.
Since $S_i$ is complete to $S_j$ for all  distinct $i,j \in [p]$ and $Z$ is complete to $V_q$, we have $\dss_{i=1}^p \omega_i \leq \omega-1$. Moreover, for each component $X$ in $G[W]-X_0$,  $V(X)$ is complete to $S_k$ if $k\in [p]\backslash F_{X}$,; therefore,
$\omega(X) + \dss_{k \in [p]\backslash F_{X}} \omega_k \leq \omega.$ Thus, by the convexity of the function $f$, 
we have $$\dss_{i=1}^p f(\omega_i) \leq f(\omega-1),$$ and 
$$f(\omega(X)) + \dss_{k \in [p]\backslash F_{X}} f(\omega_k) \leq f(\omega-1)+f(1).$$
Thus,
\begin{align*}
    \chi(G[C]- W_0) & \leq |R|+ \dss_{i=1}^p |R_i| \notag \\
    & = \max_{\substack{X: \text{ a component} \\ \text{of $G[W]-X_0$ }}} \left\{ f(\omega(X)) - \dss_{i\in F_{X}} |R_i|,0\right\} + \dss_{i=1}^p |R_i| \notag\\
    & = \max_{\substack{X: \text{ a component} \\ \text{of $G[W]-X_0$ }}} \left\{ f(\omega(X)) + \dss_{i\in [p]\backslash F_{X}} |R_i|,\dss_{i=1}^p |R_i|\right\} \notag\\
    & \leq \max_{\substack{X: \text{ a component} \\ \text{of $G[W]-X_0$ }}} \left\{ f(\omega(X)+ \dss_{i \in [p]\backslash F_{X}} f(\omega_i), \dss_{i=1}^p f(\omega_i)\right\}. \notag\\
    & \leq f(\omega-1) +f(1) \label{eq:N1Q}.
\end{align*}
This completes the proof of Claim \ref{cl:DminusW0}
\end{proof}

We can now bound $\chi(G)$. 
Since $\chi(X_0) \le R(t, \omega)$, it follows from Claim \ref{cl:DminusW0} that 
$$\chi(G[C]) \leq  \chi(G[C]- X_0]) + \chi(X_0)\le f(\omega-1) +f(1) + R(t, \omega).$$
Hence, by Claim \ref{cl:chi(G)}, Lemma \ref{lem:B} and Lemma \ref{lem:A}, we have 
\begin{align*}
    \chi(G) & \leq \max \{\omega, 3R(t,\omega)\} + t^2\omega R(t,\omega)+R(t,\omega)+f(w-1)+f(1)+R(t,\omega)\\
            &\leq t^2 \omega R(t,\omega) + 5R(t,\omega) + f(\omega-1) + f(1) \\
            &\leq f(\omega),
\end{align*}
by our choice of $f(\omega)$. This proves Theorem \ref{thm:broom_binding_function}.

\medskip
When $t=2$, we have $R(2,\omega)=\omega$, and $\chi(G[A])\le 2\omega(G)$ (by Lemma \ref{lem:A'}). Thus, using those bounds in the above inequality, we obtain 
\begin{align*}
    \chi(G)       \leq f(\omega-1)+f(1)+7\omega.
\end{align*}
By choosing $f(\omega)=7\omega^2$, we see that $f(\omega-1)+f(1)+7\omega\le f(\omega)$. Hence, $\chi(G)\le 7\omega^2$, completing the proof of Theorem \ref{thm:chair_binding_function}. 

\section{Proof of Theorem \ref{thm:Ktt-free}}\label{sec:Ktt}
\setcounter{claim}{0}
 
Let $g(\omega)$ be a convex function satisfying
\begin{itemize}
    \item $1\leq g(1)$,
    \item $\omega + R(t,\omega) + (t+2){t^2}\omega R(t-1,\omega)+(2{ t^2}+4) R(t,\omega)\leq g(\omega)$, and
    \item $g(\omega-1) + \omega + R(t,\omega) \leq g(\omega).$
\end{itemize}
Similar to the proof in Section \ref{sec:main_theorems}, by the generalized Binomial Theorem, $g(\omega)$ can be chosen such that $g(\omega) = C_t\lp \omega R(t,\omega) + \omega R(t-1,\omega)\rp$ for some large constant $C_t$ depending only on $t$. Hence we may choose $g(\omega)$ such that $g(\omega) = o(\omega^{t})$. 
 
We will show that $\chi(G)\le g(\omega(G))$ by applying induction on $\omega(G)$. It is clear that the assertion of the theorem holds when $\omega(G)=1$. Let $G$ be a \{$t$-broom, $K_{t,t}$\}-free graph with $\omega(G)=\omega \geq 2$ and, for all \{$t$-broom, $K_{t,t}$\}-free graphs $H$ with $\omega(H)<\omega$, we have $\chi(H)\leq g(\omega(H))$.

We choose pairwise disjoint independent sets $V_1, \ldots, V_q$ in $G$, such that 
\begin{itemize}
  \item [(1)] $|V_q|=t$ and $|V_i|=1$ for $i\in [q-1]$,  
  \item [(2)] $Q:=G[\displaystyle\cup_{i\in [q]}V_i]$ is a complete $q$-partite graph, and
  \item [(3)] subject to (1) and (2), $q$ is maximum. 
\end{itemize}
Such $Q$ with $q\geq 2$ must exist, otherwise $G$ is $K_{1,t}$-free and hence $\Delta(G) < R(t,\omega)$ and we are done. 
Clearly,  $2\le q\le \omega$. Let $V_i=\{v_i\}$ for $i\in [q-1]$.
We partition $N(Q)$ as follows. 
\begin{itemize}
\item $A:= \{ v\in N(Q): \textrm{ $v$ is mixed on $V_q$ and $v$ is not complete to $V(Q)\backslash V_q$}\}$.

\item $B:= \{ v\in N(Q): \textrm{ $v$ is mixed on $V_q$ and $v$ is  complete to $V(Q)\backslash V_q$}\}$.

\item $C:=N(Q)\backslash (A\cup B)$.
\end{itemize}
Thus, for each $v\in C$,  either $v$ is complete to $V_q$ or $v$ is anticomplete to $V_q$.  Let $Z = N(V_q)\cap C$ and $W=C\backslash Z$; so $Z=\{v\in C: v \mbox{ is complete to } V_q\}$ and $W:=\{v\in C: v \mbox{  is anticomplete to } V_q\}$. Note that  $N(Q)$ is the disjoint union of $A, B, Z, W$. 

\begin{claim} \label{cl:ABZ}
$N(W)\cap N^2(Q)=\emptyset$, $\chi(G[A]) \le { t^2}(2R(t,\omega)+(t+2)\omega R(t-1, \omega))$, $\Delta(G[B]) < R(t, \omega)$, and
$|Z| \le R(t,\omega)$.
\end{claim}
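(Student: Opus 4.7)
The plan is to verify the four assertions in the claim separately, using the structural lemmas developed in Section~\ref{sec:structre_t-broom-free} together with one small direct argument for the first part.

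For $N(W)\cap N^2(Q)=\emptyset$, I would argue directly by contradiction. Suppose $x\in N^2(Q)$ and $w\in W$ with $xw\in E(G)$. Since $w\in W\subseteq N(Q)$ and $w$ is anticomplete to $V_q$, $w$ has some neighbor $v_i$ with $i\in [q-1]$. Since $v_i$ is complete to $V_q$ in the complete $q$-partite graph $Q$, while $x\in N^2(Q)$ is anticomplete to $V(Q)$ and $w$ is anticomplete to $V_q$, the tuple $(v_i,wx,V_q)$ forms an induced $t$-broom, a contradiction.

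For $\Delta(G[B])<R(t,\omega)$, this is a direct application of Lemma~\ref{lem:B}, since the sets $V_1,\ldots,V_q$ and $Q$ chosen in this section satisfy the hypotheses of that lemma (with $q$ maximized). Similarly, the bound $|Z|\le \beta(t)\frac{\omega^{t-2+2/(t+1)}}{\log^{t-3}\omega}$ follows immediately from Lemma~\ref{lem:KttZ}, whose hypotheses (a \{$t$-broom,$K_{t,t}$\}-free graph together with a complete $q$-partite induced subgraph $Q$ of the described shape, with $q$ maximum) are precisely those we have set up.

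For the chromatic bound on $G[A]$, the idea is to partition $A$ according to how each vertex interacts with $V_q$ and then apply Lemma~\ref{lem:KttA} to each piece. Every $v\in A$ is neutral to $V_q$, so there is some $a_1\in V_q\cap N(v)$ and some $a_2\in V_q\setminus N(v)$; assign $v$ the label $(a_1,a_2)$. There are at most $t(t-1)\le t^2$ such labels, and the vertices with a fixed label form a set satisfying the hypotheses of Lemma~\ref{lem:KttA}. That lemma produces, for each label, a set $X$ of size at most $\omega(t+2)R(t-1,\omega)$ whose complement (within the labeled set) is $(2R(t,\omega)-1)$-degenerate and hence has chromatic number at most $2R(t,\omega)$; coloring $X$ trivially adds at most $\omega(t+2)R(t-1,\omega)$ more colors. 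Summing over the $\le t^2$ labels yields $\chi(G[A])\le t^2(2R(t,\omega)+(t+2)\omega R(t-1,\omega))$, as required.

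The only step that requires any genuine work is the first one; the remaining three are essentially bookkeeping on top of the section's lemmas. The main care point will be checking that the hypotheses of Lemmas~\ref{lem:B}, \ref{lem:KttA}, and \ref{lem:KttZ} are indeed met by the current choices of $Q$, $A$, $B$, and $Z$ (in particular that $q$ is maximized, that each labeled sub-piece of $A$ matches the definition of $A$ used inside Lemma~\ref{lem:KttA}, and that $G$ being \{$t$-broom, $K_{t,t}$\}-free is used where needed).
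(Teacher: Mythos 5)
Your proposal is correct and matches the paper's own proof essentially step for step: the same induced $t$-broom $(v_i, wx, V_q)$ for the first assertion, direct invocations of Lemmas~\ref{lem:B} and \ref{lem:KttZ} (whose maximality hypothesis on $q$ is indeed satisfied by choice (3)), and the same decomposition of $A$ into at most $t^2$ pieces by adjacency/non-adjacency labels on $V_q$, each handled via Lemma~\ref{lem:KttA} with $2R(t,\omega)$ colors for the degenerate part plus $|X|\le (t+2)\omega R(t-1,\omega)$ colors for the exceptional set. The only cosmetic difference is that you partition $A$ by assigning one label per vertex while the paper works with the (possibly overlapping) cover by the sets $A^{(i,j)}$; both give the same bound since degeneracy and the bound on $|X|$ pass to induced subgraphs.
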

\begin{proof} 
Suppose there exists $wy\in E(G)$ with $w\in W$ and $y\in N^2(Q)$. Choose $i\in [q-1]$ such that $wv_i\in E(G)$. Now $(v_i,wy,V_q)$ is an induced $t$-broom in $G$, a contradiction. Hence, $N(W)\cap N^2(Q)=\emptyset$.

By Lemma \ref{lem:B}, we have $\Delta(G[B]) < R(t, \omega)$; hence $\chi(G[B])\le R(t,\omega)$. 
For $Z$, recall that $Z$ is complete to $V_q$. Hence $\omega(G[Z])\leq \omega-1$. Moreover, since $|V_q|= t$ and $G$ is $K_{t,t}$-free, we have that $\alpha(G[Z]) < t$. So $|Z| \leq R(t,\omega)$.

It remains to bound  $\chi(G[A])$. 
Write $V_q = \{a_1, a_2, \ldots, a_t\}$. 
Since $A$ is mixed on $V_q$, for each $v\in A$, there exist $i,j \in [t]$ such that  $v a_i \in E(G)$ and $v a_j \not\in E(G)$.
Let $A^{(i,j)} = \{v \in A: v a_i \in E(G), \textrm{ and } v a_j \not\in E(G)\}$. 
By Lemma \ref{lem:KttA}, there exists $X\subseteq A^{(i,j)}$ such that $|X|\le (t+2)\omega R(t-1,\omega)$ and 
$G[A^{(i,j)}\backslash X]$ is $(2R(t,\omega)-1)$-degenerate. Hence, 
$\chi(G[A^{(i,j)}]) \leq 2R(t,\omega) + |X| = 2R(t,\omega)+(t+2)\omega R(t-1, \omega).$ Thus, 
$\chi(G[A])\le  t^2 (2R(t,\omega)+ (t+2)\omega R(t-1, \omega)).$
\end{proof}

Next we consider $G[W]$. It follow from (i) of Lemma~\ref{lem:W} that, for any component $X$ of $G[W]$, $V(X)$ is complete to its neighborhood in $V(Q)\backslash V_q$. Let $X_0$ denote the union of all components of $G[W]$ with chromatic number at most $3R(t, \omega)$. By the definition of $X_0$ and the fact that every component of $G[W]$ has clique number at most $\omega-1$, we have the following claim.
\begin{claim}\label{cl:WW0}
$\chi(X_0) \leq 3R(t, \omega)$ and $\chi(G[W]-X_0)\leq g(\omega-1)$.
\end{claim}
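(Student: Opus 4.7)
The plan is as follows. The first assertion is immediate from the definition: by construction, $W_0$ is the union of those components $W_v$ of $G[W]$ with $\chi(W_v)\le 3R(t,\omega)$, and since the chromatic number of a graph equals the maximum chromatic number over its components, $\chi(G[W_0])\le 3R(t,\omega)$. The same observation reduces the second assertion to showing $\chi(W_v)\le g(\omega-1)$ for every component $W_v$ of $G[W]$ meeting $W\setminus W_0$; in fact I will establish this bound for every component of $G[W]$.

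The key step is to show that every component $W_v$ of $G[W]$ is contained in a single $W_I$ with $\emptyset \ne I \subseteq \{v_1,\ldots,v_{q-1}\}$. By Claim~\ref{cl:II'}, $W_I$ is anticomplete to $W_{I'}$ for distinct subsets $I,I'$, so each component of $G[W]$ lies in a unique $W_I$. Furthermore $I$ must be nonempty: a vertex in $W_\emptyset$ would be anticomplete to $\{v_1,\ldots,v_{q-1}\}$ and, as a vertex of $W$, also anticomplete to $V_q$, hence anticomplete to all of $V(Q)$, contrary to its membership in $N(Q)$.

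Now $\{v_1,\ldots,v_{q-1}\}$ is a clique in $G$ (distinct parts of the complete $q$-partite graph $Q$ are fully adjacent), so $I$ is itself a nonempty clique, and by definition of $W_I$ every vertex of $W_v$ is complete to $I$. Consequently, any clique of $W_v$ together with $I$ is a clique of $G$, which gives $\omega(W_v) + |I| \le \omega$ and hence $\omega(W_v) \le \omega - 1$. Being an induced subgraph of $G$, $W_v$ is still \{$t$-broom, $K_{t,t}$\}-free, so the induction hypothesis applies and yields $\chi(W_v) \le g(\omega(W_v)) \le g(\omega-1)$; the last step uses that $g$ is non-decreasing, which is forced by the relation $g(\omega-1) + c(t)\omega^{t-2+\frac{2}{t+1}}/\log^{t-3}\omega \le g(\omega)$ assumed at the start of the proof. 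Taking the maximum over the relevant components gives $\chi(G[W\setminus W_0]) \le g(\omega-1)$.

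I do not anticipate a serious obstacle: the argument hinges on the combinatorial observation that every component of $G[W]$ is pinned down by a unique nonempty clique $I$ of common neighbors in $Q$, after which the drop $\omega(W_v) \le \omega - 1$ and the inductive hypothesis deliver the bound immediately.
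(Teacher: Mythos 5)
Your proposal is correct and follows essentially the same route as the paper: the paper also observes (implicitly via Claim~\ref{cl:II'}) that each component of $G[W\setminus W_0]$ lies inside a single $W_I$ with $I\neq\emptyset$ and is therefore complete to some $v_i$, concludes $\omega(G[W\setminus W_0])\le\omega-1$, and invokes the induction hypothesis. Your componentwise application of induction and the explicit check that $g$ is non-decreasing are merely spelled-out details of the paper's more compressed argument.
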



\begin{claim}\label{cl:ABW0}
$A\cup B$ is anticomplete to $W\backslash V(X_0)$. 
\end{claim}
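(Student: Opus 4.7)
The plan is by contradiction. Suppose there exist $v\in A\cup B$ and $w\in W\setminus W_0$ with $vw\in E(G)$. Since $v$ is neutral to $V_q$, fix $a_1\in V_q\cap N(v)$ and $a_2\in V_q\setminus N(v)$; since $w\in W$ is anticomplete to $V_q$, $wa_1,wa_2\notin E(G)$. Let $W_w$ denote the component of $G[W]$ containing $w$; by the definition of $W_0$ we have $\chi(W_w)>3R(t,\omega)$. I aim to show that $\Delta(G[V(W_w)])<3R(t,\omega)$, which via greedy coloring yields $\chi(W_w)\le 3R(t,\omega)$, contradicting $w\notin W_0$.

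Suppose toward contradiction that some $y\in V(W_w)$ satisfies $|N(y)\cap V(W_w)|\ge 3R(t,\omega)$, and set $P:=N(y)\cap V(W_w)\subseteq W$. Since $P$ is anticomplete to $V_q$, any $T\subseteq P$ automatically avoids adjacency to $a_1,a_2$. Following the strategy of Lemma~\ref{lem:N2(Q)_and_3beyond}, I take a shortest induced path inside $G[V(W_w)\cup\{v\}]$ from $y$ to $v$, say $y=y_0y_1\cdots y_j v$ (with $j$ chosen minimally so that $y_i\notin N(v)$ for $i<j$), and extend by $a_1$ to obtain the induced path $y_0y_1\cdots y_j v a_1$ in $G$; the extension by $a_1$ preserves inducedness since each $y_i\in W$ is anticomplete to $V_q$.

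I then partition $P$ into three pieces based on adjacencies to two appropriate consecutive vertices of this extended path (e.g., $y_{j-1},y_{j-2}$ when $j\ge 2$, or $v,a_1$ in the boundary case $j\le 1$); by pigeonhole one piece has size at least $R(t,\omega)$, and Ramsey (using $\omega(G[P])\le\omega$) produces an independent set $T\subseteq P$ of size $t$ inside it. In each case, $T$ together with two suitable consecutive vertices of the extended path as the handle $v_1v_2$ yields an induced $t$-broom in $G$, contradicting $t$-broom-freeness. The required non-adjacencies between $T$ and handle vertices come from (i) $T\subseteq W$ being anticomplete to $V_q$ (so $a_1,a_2$ are always safe), (ii) Lemma~\ref{lem:W}(i), which places $W_w$ inside some $W_I$ with $I\subseteq\{v_1,\ldots,v_{q-1}\}$ (controlling adjacency of $T$ to any $v_k$ by whether $v_k\in I$), and (iii) the minimality of the path $y_0y_1\cdots y_j$ (controlling adjacency to the $y_i$'s).

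The main obstacle is the boundary sub-case $j=0$ (so $y$ itself is a neighbor of $v$) with the large piece of $P$ lying in $P\cap N(v)$. Here the handle must include a vertex $v_k\in\{v_1,\ldots,v_{q-1}\}\setminus N(v)\setminus I$ to pair with $a_1$: the existence of $v_k$ uses the maximality of $q$ (which forces $I\subsetneq\{v_1,\ldots,v_{q-1}\}$ in typical configurations) together with the structural distinction between $v\in A$ (where $v$ has a non-neighbor in $\{v_1,\ldots,v_{q-1}\}$) and $v\in B$ (where $v$ is complete to $\{v_1,\ldots,v_{q-1}\}$). The residual corner case $I=\{v_1,\ldots,v_{q-1}\}$ is handled by a separate argument that uses the resulting clique $\{v,v_1,\ldots,v_{q-1},w\}$ together with $V_q$ to rearrange parts of $Q$, contradicting its maximality.
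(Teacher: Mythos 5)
Your main engine (shortest induced path from a high-degree vertex $y$ of $W_w$ to $v$, extended by $a_1$, then the trichotomy of Lemma \ref{lem:N2(Q)_and_3beyond} with the distance/minimality arguments supplying the non-adjacencies) does go through when $j\ge 2$, but the proof breaks at exactly the two boundary situations you flag, and your proposed resolutions for both are incorrect. First, when $j\le 1$ and the large piece of $P=N(y)\cap V(W_w)$ lies in $N(v)$, you ask for a handle vertex $v_k\in\{v_1,\ldots,v_{q-1}\}\setminus(N(v)\cup I)$ to pair with $a_1$. If $v\in B$, no such $v_k$ exists at all, since by definition $v$ is complete to $\{v_1,\ldots,v_{q-1}\}$; this is not a ``structural distinction'' that helps you, it is a configuration in which your only proposed handle is unavailable. (The case is repairable, but by a broom you never construct: taking $v_r\notin I$ with $vv_r\in E(G)$, the broom $(v, v_r a_2, T)$ works because $v_r a_2\in E(G)$, $va_2\notin E(G)$, and $T$ is anticomplete to $\{v_r,a_2\}$; without something like this the $v\in B$ case is simply open.) Second, your corner case $I=\{v_1,\ldots,v_{q-1}\}$ is not closed by your sketch: $\{v,v_1,\ldots,v_{q-1},w\}$ is indeed a clique, but it cannot be combined with $V_q$ to contradict the maximality of $Q$, because $w$ is anticomplete to $V_q$ and $v$ is only neutral to $V_q$, so neither can serve as a singleton part of a complete multipartite graph having $V_q$ as a part; moreover, maximality of $q$ does not force $I\subsetneq\{v_1,\ldots,v_{q-1}\}$. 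What actually kills this case is Lemma \ref{lem:B}: if $I$ is full, then $W_w\subseteq W_I$ is contained in the set $B$ of that lemma, so $\Delta(G[V(W_w)])<R(t,\omega)$ and hence $\chi(W_w)\le R(t,\omega)<3R(t,\omega)$, contradicting $w\notin W_0$ with no broom needed.

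For comparison, the paper sidesteps your boundary cases entirely rather than fighting them. It first shows that the index sets of the nonempty classes $D_I=W_I\setminus W_0$ form a chain under inclusion (a crossing pair would give a broom via Claim \ref{cl:II'} and an independent $t$-set in a class), and uses Lemma \ref{lem:B} to rule out the full index set; this yields a single vertex $v_j$ anticomplete to all of $W\setminus W_0$. It then applies Lemma \ref{lem:N2(Q)_and_3beyond} as a black box to the auxiliary graph $G[\{v_j,a_2,a\}\cup W_w]$ (if $av_j\in E(G)$) or $G[\{v_j,a_1,a\}\cup W_w]$ (if $av_j\notin E(G)$), with the two-vertex complete bipartite graph on $\{v_j,a_2\}$ or $\{v_j,a_1\}$ playing the role of $Q$; the vertex $v_j$ is precisely the guaranteed handle vertex whose absence derails your $j\le 1$ analysis. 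If you want to keep your direct approach, you must add the $(v,v_ra_2,T)$ broom for the $v\in B$ (and $v\in A$ with $v_r\in N(v)$) subcases and replace the $Q$-rearrangement sketch with the Lemma \ref{lem:B} degree bound.
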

\begin{proof}
For any component $X$ in $G[W]-X_0$, $\chi(X)>3R(t,\omega)$ by the definition of $X_0$. This implies that $|V(X)|\ge \chi(X)>3R(t,\omega)$; hence $X$ contains an independent set of size $t$.

 We claim  that, for any distinct components $X_1,X_2$ of $G[W]-X_0$, $N(X_1)\cap V(Q) \subseteq N(X_2)\cap V(Q)$ or $N(X_2) \cap V(Q) \subseteq N(X_1)\cap V(Q)$. 
 For, suppose there exist distinct $u_1,u_2\in V(Q)$ such that $u_1\in (N(X_1)\backslash N(X_2))\cap V(Q)$ and $u_2\in  (N(X_2)\backslash N(X_1))\cap V(Q)$. We know that $X_2$ has an independent set of size $t$, say $T_2$. Let $x_1$ be a vertex of $X_1$; then  $(u_2, u_1x_1, T_2)$ is an induced $t$-broom in $G$, a contradiction.
 
 Thus, we choose a component $X$ of $G[W]-X_0$ such that $N(X)\cap V(Q)$ is maximal. Observe that $N(X)\cap V(Q)=N(X)\cap (V(Q)\backslash V_q)$ which is a proper subset of $V(Q)\backslash V_q$; otherwise $V(X)$ is complete to  $V(Q)\backslash V_q$ and $\Delta(X)<R(t,\omega)$ by Lemma~\ref{lem:B}, contradicting that $\chi(X)>3R(t,\omega)$. So there exists $j\in [q-1]$ such that $v_j\notin N(X)$. Hence by the choice of $X$, $v_j \notin N(X')$ for any component $X'$ of  $G[W]-X_0$. This implies that $v_j$ is anticomplete to $W\backslash V(X_0)$. 
 
 Now suppose there exists a vertex $a$ in $A\cap B$ such that $a$ is not anticomplete to $W\backslash V(X_0)$. Then there exists a component $X$ of $G[W]-X_0$ and $w\in V(X)$ such that $aw\in E(G)$. Since $a$ is mixed on $V_q$,  we may assume without loss of generality that $a_1,a_2\in V_q$ such that $aa_1\in E(G)$ and $aa_2\notin E(G)$.  Note that $\chi(X)>3R(t,\omega)$ and recall that $v_j$ is anticomplete to $V(X)$.

If $av_j\in E(G)$ then $G':=G[\{v_j,a_2,a\}\cup V(X)]$ is a $t$-broom-free graph and $\omega(G')\leq \omega(G)=\omega$. Note that $Q':=G'[\{v_j,a_2\}]$ is a complete bipartite subgraph of $G'$, $N_{G'}(Q')=\{a\}$, and $G'[N^{\geq 2}_{G'}(Q')]=X$. Since $a$ is not complete to $V(Q')$ in $G'$, we may apply Lemma~\ref{lem:N2(Q)_and_3beyond} and conclude that $\Delta(G'[N^{\geq 2}_{G'}(Q')])<3R(t,\omega)$. Hence, $\chi(G'[N_{G'}^{\geq 2}(Q')])\le 3R(t,\omega)$, a contradiction as $X=G'[N^{\geq 2}_{G'}(Q')]$ and $\chi(X)>3R(t,\omega)$. 

Now assume that $av_j\notin E(G)$. Then $G'':=G[\{v_j,a_1,a\}\cup V(X)]$ is a $t$-broom-free graph and $Q'':=G''[\{v_j,a_1\}]$ is a complete bipartite subgraph of $G''$,  $N_{G''}(Q'')=\{a\}$, and $X= G''[N^{\geq 2}_{G''}(Q'')]$. Since $a$ is not complete to $V(Q'')$, we may apply Lemma~\ref{lem:N2(Q)_and_3beyond} and conclude that $\Delta(G''[N^{\geq 2}_{G''}(Q'')])<3R(t,\omega)$. Hence, $\chi(G''[N_{G''}^{\geq 2}(Q'')])\le 3R(t,\omega)$, a contradiction as $X=G''[N^{\geq 2}_{G''}(Q'')]$ and $\chi(X)>3R(t,\omega)$.
\end{proof}

Note that $V(G)=V(Q)\cup N(Q)\cup N^{\ge 2}(Q)$ and $N(Q)=A\cup B\cup Z\cup V(X_0) \cup (W\backslash V(X_0))$ and $\chi(Q)=q$. Also note that $W\backslash V(X_0)$ is anticomplete to $A\cup B\cup N^{\geq 2}(Q)$ (by Claims \ref{cl:ABZ} and \ref{cl:ABW0}), $V(X_0)$ is anticompete to $N^{\geq 2}(Q)$ (by Claim \ref{cl:ABZ}), and $W\backslash V(X_0)$ is anticomplete to $V(X_0)$ (by definition). Thus, we have 
$$\chi(G) \leq  q+ |Z|+
 \max \left\{ \chi(G[W\backslash V(X_0)]),\chi(G[A])+\chi(G[B])+\max \{\chi(X_0), \chi(G[N^{\geq 2}(Q)])\} \right\}.$$

By the maximality of $q$, no vertex in $N(Q)$ is complete to $V_j$ for all $j\in [q]$. Thus, by Lemma \ref{lem:N2(Q)_and_3beyond}, $\chi(G[ N^{\geq 2}(Q)]) \le  3R(t,\omega)$. 
Therefore, 
$$\chi(G) \leq  \omega+ R(t,\omega) +
 \max \{ g(\omega-1),(t+2){t^2}\omega R(t-1,\omega)+(2{ t^2}+4) R(t,\omega)\}.$$
Hence, by the choice of $g(\omega)$, we have $\chi(G)\le g(\omega)$, completing the proof of Theorem \ref{thm:Ktt-free}. \\

{\bf Acknowledgement} We would like to sincerely thank the anonymous referees for their valuable comments and suggestions that greatly improved the manuscript.

\end{document}